\documentclass{article}

\usepackage[letterpaper,vmargin=1in,hmargin=1.25in]{geometry}
\usepackage{graphicx,tikz,graphics,float,fp,amsthm,cite,multicol,subcaption,caption,amssymb,mathtools,hyphenat,multirow,longtable,array}
\usetikzlibrary{automata,positioning, decorations.markings,fixedpointarithmetic,arrows}
\usepackage{longtable,comment}
\usepackage[shortlabels]{enumitem}
\usepackage{url}
\usetikzlibrary{external}

\theoremstyle{plain}
\newtheorem{theorem}{Theorem}[section]
\newtheorem{lemma}[theorem]{Lemma}

\newtheorem{corollary}[theorem]{Corollary}

\theoremstyle{definition}
\newtheorem*{example}{Example}
\newtheorem*{example*}{Example}
\newtheorem*{definition}{Definition}

\DeclareMathOperator{\NRD}{NRD}
\DeclareMathOperator{\ANRD}{ANRD}

\tikzset{
  on each segment/.style={
    decorate,
    decoration={
      show path construction,
      moveto code={},
      lineto code={
        \path [#1]
        (\tikzinputsegmentfirst) -- (\tikzinputsegmentlast);
      },
      curveto code={
        \path [#1] (\tikzinputsegmentfirst)
        .. controls
        (\tikzinputsegmentsupporta) and (\tikzinputsegmentsupportb)
        ..
        (\tikzinputsegmentlast);
      },
      closepath code={
        \path [#1]
        (\tikzinputsegmentfirst) -- (\tikzinputsegmentlast);
      },
    },
  },
  mid arrow/.style={postaction={decorate,decoration={
        markings,
        mark=at position .5 with {\arrow[#1]{stealth}}
      }}},
}

\tikzset{every loop/.style={min distance=10mm,looseness=10}} 
\makeatletter 

\DeclarePairedDelimiter{\ceil}{\lceil}{\rceil}

\usepackage{tikz}
\tikzstyle{vert}=[shape=circle,draw=black,fill=white, inner sep=.5mm]
\usetikzlibrary{decorations.pathreplacing,decorations.markings,arrows,calc,math,arrows.meta}
\newcolumntype{C}{>{$}c<{$}}
\newcolumntype{M}{>{$}p{10cm}<{$}}

\title{Localization and metric dimension for \\ families of highly structured digraphs}

\author{
Robert F.\ Bailey\thanks{ School of Science and the Environment (Mathematics), Grenfell Campus, Memorial University, Corner Brook, NL, A2H 6P9, Canada. E-mail: \texttt{robert.bailey@mun.ca}, \texttt{bep275@mun.ca} }
\and Brittany Pittman\footnotemark[1]
}

\date{\today}

\begin{document}

\maketitle


\begin{abstract}
We investigate metric dimension and the localization game for several families of directed analogues of strongly regular graphs and their generalizations, adapting a probabilistic method of Babai (1980) for bounding the size of resolving sets in undirected strongly regular graphs.  We derive upper bounds on the localization number and metric dimension depending on the order of the graph and the maximum number of common out-neighbours for a pair of vertices.  We consider normally regular digraphs, so-called ``ordinary graphs'', classes of Deza digraphs, divisible design digraphs, nearly doubly regular tournaments, and certain doubly regular team tournaments.  In particular, for asymmetric normally regular digraphs on $n$ vertices, we show that these invariants are bounded above by $O(\sqrt{n} \log n)$, and improve this to $O(\log n)$ for a class of doubly regular team tournaments.
\end{abstract}


\section{Introduction}

Pursuit-evasion games are combinatorial models used to study the detection, containment, or capture of an adversary moving within a network, typically represented as a graph. In these games, a group of pursuers, such as a set of cops, attempts to track down and capture an adversary, such as a robber, who is moving through a graph. These games are played in discrete time steps, and players move in alternating rounds. Each player is subject to movement rules that depend on the game being studied. One of the most well-known of these models is \textit{Cops and Robber}, defined by Nowakowski and Winkler \cite{NW} and, independently, by Quilliot \cite{Qcops}. In this game, a set of cops and a robber move to adjacent vertices in alternate rounds. The cops aim to capture the robber in a finite number of rounds, and the robber aims to avoid capture indefinitely. The main question of this game is to determine the minimum number of cops required to guarantee the robber's eventual capture. Many variants of this game have been studied, and each introduce new rules or restrictions for the players. For instance, we can limit the visibility of the robber by only allowing the cop to see the robber when they are within a specified distance from the robber (see \cite{LMCR}).  An overview of pursuit-evasion games is given in~\cite{Bonato}.


\subsection{The localization game}
In this paper, we focus on a variant called the \textit{localization game} in which the robber is invisible to the cops. It was introduced independently by Carraher et al.\ in \cite{Carraher} and by Seager in \cite{seager1,seager2}. In the localization game, one player controls a set of \textit{cops}, and the other controls a single \textit{robber}. It is played in discrete time-steps, which we call \textit{rounds}. To start the game, the robber chooses an initial vertex to occupy. In each subsequent round, the cops move, followed by the robber. The cops can choose any set of vertices to occupy, but the robber can only move to a neighbouring vertex (or remain on their current position).

After each time the robber moves, each cop sends out a \textit{cop probe} and that cop receives their distance to the robber. If the cops occupy vertices $u_1, u_2, \ldots, u_k,$ then each cope probe gives the distance $d_i$ from $u_i$ to the vertex occupied by the robber. This gives a \textit{distance vector}, written $(d_1, d_2, \ldots, d_k).$ The robber is \textit{located} if, in any round, there is a unique vertex corresponding to the distance vector received by the cops. If there is a strategy to locate the robber in a finite number of moves, the cops win. The minimum number of cops required to locate the robber in a finite number of rounds (when the robber is playing optimally) is the \textit{localization number}, written $\zeta(G)$ for a graph $G$.

In \cite{BCMP}, the localization game was extended to oriented graphs. The game is played analogously on oriented graphs, with the robber moving along directed edges and the distance probes measuring the length of the shortest directed path from the cop to the robber. 

This naturally leads to a broader question: how does the localization game behave on more general directed graphs, including those with digons (two arcs in opposite directions between a single pair of vertices)? In this more general setting, there are added possibilities for both the robber's movement and the cops' distance measurements, adding complexity to the game.

In a symmetric digraph, for any arc $(u,v)$, there is also an arc $(v,u)$. In a symmetric digraph, each arc has a corresponding arc in the opposite direction, so the digraph behaves similarly to an undirected graph in terms of movement and distance. We begin with a result which shows that replacing each undirected edge with a digon does not alter the localization number.

\begin{theorem}
If $D$ is a symmetric digraph and $G$ is the underlying undirected graph, then $\zeta(D)=\zeta(G).$
\end{theorem}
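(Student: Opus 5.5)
The plan is to show that the two games are literally the same game, by checking that the ingredients the rules depend on coincide for $D$ and $G$. These ingredients are the vertex set, the allowed robber moves, and the answers to cop probes. Once this is established, any cop strategy on one is a cop strategy on the other with identical outcomes, and likewise for the robber. Since $\zeta$ is defined purely in terms of these ingredients, the two localization numbers must agree.

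First, $D$ and $G$ have the same vertex set. By definition of the underlying graph, $uv\in E(G)$ if and only if $(u,v)\in A(D)$ or $(v,u)\in A(D)$. Because $D$ is symmetric, this holds if and only if both $(u,v)$ and $(v,u)$ are arcs. Hence the out-neighbourhood of $u$ in $D$ equals its neighbourhood in $G$. So the robber's legal moves from any vertex (staying put, or moving along an arc) are identical in the two settings, and the cops' placements are unrestricted in both.

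Second, I will show $d_D(u,v)=d_G(u,v)$ for all $u,v$, including the value $\infty$ when no path exists. A directed path $u=x_0,x_1,\dots,x_k=v$ in $D$ gives a walk in $G$ of the same length, since each arc yields an edge. Conversely, a path in $G$ lifts to a directed path in $D$, since each edge $x_ix_{i+1}$ corresponds to the arc $(x_i,x_{i+1})$ by symmetry. So the minimum lengths coincide. Consequently, for any cop positions $u_1,\dots,u_k$ and any robber vertex, the distance vector $(d_1,\dots,d_k)$ is the same in $D$ and $G$. In particular, the set of vertices consistent with a given sequence of probe results and robber moves is the same in both games.

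With these facts, the argument finishes by induction on rounds. The information state, meaning the set of possible robber locations consistent with the history, evolves identically in the two games under the same cop choices. A winning strategy for $k$ cops on $G$ therefore wins on $D$, and vice versa, giving $\zeta(D)\le\zeta(G)$ and $\zeta(G)\le\zeta(D)$. There is no real obstacle here. The only point needing care is making the equivalence of strategies precise: a strategy maps probe histories to placements, and both the histories and the placements are identical objects in the two games.
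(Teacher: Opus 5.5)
Your proposal is correct and takes the same approach as the paper. The paper's justification is exactly that distances, and hence probe vectors, coincide in $D$ and $G$ (as do the robber's moves), so the game proceeds identically on both; you additionally spell out the move equivalence and the strategy-transfer step that the paper leaves implicit.
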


This result follows from the fact that, in a symmetric digraph, the distance from any vertex to another is the same as in the underlying undirected graph. Consequently, the distance vectors produced by cop probes are identical in both settings when the cops and robber positions are the same in each graph, and thus the same number of cops are required to uniquely determine the robber's position. Since the localization game proceeds identically on symmetric digraphs and their undirected counterparts, our attention will be restricted to digraphs that are not symmetric. 


\subsection{Metric dimension}

A {\em resolving set} in an undirected graph $G$ is a subset $R$ of the vertex set chosen so that for any pair of vertices $u,v$ or $G$, there exists some $w\in R$ with $d(u,w)\neq d(v,w)$.  The smallest size of a resolving set is called the {\em metric dimension} of $G$, which we denote by $\dim(G)$.  These concepts were introduced to graph theory in the 1970s by Harary and Melter~\cite{HM76} and, independently, Slater~\cite{Slater75}; however, in more general metric spaces, the concept can be found in the literature much earlier (see~\cite{Blumenthal53}).  For further details, the reader is referred to the surveys~\cite{bsmd,Tillquist}.  We note that the metric dimension of a graph is an upper bound on its localization number, as if the cops begin by occupying a resolving set they will win the game immediately.

Metric dimension was first defined for oriented graphs by Chartrand, Raines, and Zhang in 2000~\cite{chartrand-raines-zhang-2000}. They define a resolving set to be a set $R$ such that for each pair of vertices $u,v$ not in $R$, there exists a vertex $w$ in $R$ such that the distance from $u$ to $w$ differs from the distance from $v$ to $w$. They require that $d(u,w)$ and $d(v,w)$ are defined. As in the undirected case, the metric dimension of a directed graph $D$, denoted by $\dim(D)$, is the cardinality of the smallest resolving set.  Unlike the undirected case, it is possible that a resolving set may not exist, in which case $\dim(D)$ is undefined.

The metric dimension of digraphs was later studied by Bensmail, Mc~Inerney, and Nisse in \cite{ben}. They instead define a resolving set to be a set $R$ such that for each pair of vertices $u,v$ not in $R$, there exists a vertex $w$ in $R$ such that the distance from $w$ to $u$ differs from the distance from $w$ to $v$. That is, they consider the distances from $R$ to vertices not in $R$, rather than the distance from vertices not in $R$ to the vertices in $R$. In their definition, they allow for undefined distances, and hence, the metric dimension is defined for any digraph under their definition. Note that in the setting of strongly connected digraphs, the two definitions for resolving sets and metric dimension are equivalent.

In this paper, we use the notion of metric dimension introduced in \cite{ben}. Unlike the resolving sets defined in \cite{chartrand-raines-zhang-2000}, under this definition a resolving set is not necessarily an optimal placement for the cops in the localization game.
However, there always exists a set of size $\dim(D)$ that is a resolving set under the original definition from \cite{chartrand-raines-zhang-2000} extended to permit undefined distances.
In general, given a digraph $D$, we can find a minimum resolving set $\widetilde{R}$ in the digraph $\widetilde{D}$ obtained by reversing all arcs of $D$. In $D$, the set $\widetilde{R}$ is a set of size $\dim(D)$ such that for each pair of vertices $u,v$ not in $R$, there exists a vertex $w$ in $R$ such that the distance from $u$ to $w$ differs from the distance from $v$ to $w$. That is, in $D$, $\widetilde{R}$ is a resolving set as defined in \cite{chartrand-raines-zhang-2000} extended to the weakly connected setting. Thus this set is an optimal placement of $\dim(D)$-many cops, so (as in the undirected setting) the localization number of a digraph is bounded above by its metric dimension.

In undirected graphs, several variants of resolving sets have been defined. One such variant is a distinguishing set, which is a set $S$ of vertices that distinguish any pair of vertices $u$ and $v$ not in $S$. A set $S$ distinguishes $u$ and $v$ if $N(u)\cap S \neq N(v)\cap S$. Any distinguishing set is also a resolving set. We extend this definition to directed graphs as follows. Given a digraph $D$ and a set $S\subseteq V(D)$, we say that $S$ \textit{distinguishes} distinct vertices $u, v\in V(D)$ if either $u\in S$, $v\in S$, or $N^-(u)\cap S \neq N^-(v)\cap S.$ We say that $S$ is a \textit{distinguishing set} if $S$ distinguishes each pair of distinct vertices. Any distinguishing set is also a resolving set, and it follows that if $D$ has a distinguishing set $S$, then $|S|\geq \dim (D)\geq \zeta(D)$.  The terminology ``distinguishing set'' is due to Babai~\cite{LB}; we will use this, but not the term ``distinguishing number'' (to refer to the least size of such a set) which has become widely used with a different meaning, introduced by Albertson and Collins~\cite{AlbCol}.


\subsection{Strongly regular graphs and doubly regular tournaments}

A {\em strongly regular graph} with parameters $(n,k,\lambda,\mu)$ is an undirected graph with $n$ vertices, which is $k$-regular, and where there exist non-negative integers $\lambda$ and $\mu$ such that every pair of adjacent vertices have exactly $\lambda$ common neighbours and every pair of non-adjacent vertices have exactly $\mu$ common neighbours. Equivalently, in such a graph, there are precisely $\lambda$ paths of length two between each pair of adjacent vertices and $\mu$ such paths between each pair of non-adjacent vertices.

The metric dimension of strongly regular graphs was first discussed by Bailey and Cameron in 2011~\cite[{\S}3.7]{bsmd}, where it was observed that work of Babai from 1980~\cite{LB} (which uses distinguishing sets to study the complexity of the graph isomorphism problem for strongly regular graphs) yields bounds on the metric dimension of such graphs; in particular, for any strongly regular graph $G$ with $n$ vertices (other than a complete multipartite graph), we have $\dim(G) = O(\sqrt{n}\log n)$.  In this paper, our goal is to adapt Babai's techniques to the setting of directed graphs.

Our motivating examples are the (undirected) Paley graphs and their directed analogues, the Paley tournaments.  For a prime power $q\equiv 1$~mod~$4$, define a graph $P_q$ whose vertices are the elements of the finite field $\mathbb{F}_q$, with two being adjacent if their difference is a perfect square.  It is well-known that this graph is strongly regular with parameters $(q,\ (q-1)/2,\ (q-5)/4,\ (q-1)/4)$, and is isomorphic to its complement.  A consequence of Babai's work (later observed independently by Fijav\v{z} and Mohar~\cite{FijavzMohar04} when $q$ is prime) is that $\zeta(P_q)\leq\dim(P_q)=O(\log q)$.  

In the case where $q\equiv 3$~mod~$4$, an element $a\in\mathbb{F}_q$ is a perfect square if and only if $-a$ is not, and so the Paley construction yields an orientation of the complete graph $K_q$ rather than a pair of self-complementary graphs.  In the {\em Paley tournament}, which we denote by $\overrightarrow{P_q}$, the vertex set is still $\mathbb{F}_q$, and there is an arc from $x$ to $y$ when $y-x$ is a perfect square.  This is an example of a {\em doubly regular tournament} (DRT) of order $q$ (see~\cite{ReidBrown72}), i.e.\ a directed graph with $q$ vertices, each with in-degree and out-degree $(q-1)/2$, and where any pair of distinct vertices have exactly $(q-3)/4$ common out-neighbours and $(q-3)/4$ common in-neighbours.  In \cite[Corollary 18]{BCMP} it was shown that $\zeta(\overrightarrow{P_q})\leq\dim(\overrightarrow{P_q})=O(\log q)$, just as with the undirected case.

We note that these bounds hold for any strongly regular graph with the parameters of the Paley graph, and for any doubly regular tournament.  However, since Babai's work for undirected strongly regular graphs is much more general, it is natural to investigate directed analogues.


\section{Families of highly structured digraphs}
When defining a directed analogue of strongly regular graphs, the equivalence between counting the number of common neighbours of a pair of vertices and the counting number of $2$-paths between them no longer exists.  Possibly the most well-known analogue is the {\em directed strongly regular graph} (DSRG), introduced by Duval in 1988~\cite{Duval}: in such a graph, each vertex has in-degree and out-degree equal to $k$, and for any pair of vertices $x$ and $y$, the number of directed $2$-paths from $x$ to $y$ is either $t$, $\lambda$ or $\mu$, depending (respectively) on whether $x=y$, if there is an arc from $x$ to $y$, or otherwise.  However, as our techniques rely on knowledge of the number of common neighbours of $x$ and $y$ rather than directed $2$-paths, we do not discuss DRSGs further.  Rather, we will focus on analogues defined in terms of the former.

\subsection{A useful lemma}
To obtain our main results, we adapt the proof technique used by Babai in \cite{LB} for undirected strongly regular graphs to the directed setting. Specifically, in each case we demonstrate the existence of a distinguishing set of the appropriate size. We begin by proving the following lemma, which serves as a directed analogue of \cite[Lemma 3.2]{LB}, and whose proof we follow closely (see also~\cite[Theorem 2.4]{incidence}).

\begin{lemma}\label{distset}
Let $D$ be a digraph of order $n$, and suppose that there exists some $c>0$ such that $|N^-(u) \triangle N^-(v)|\geq c$ for any pair of distinct vertices $u$ and $v$. Then, provided that $c>2\log n$, $D$ has a distinguishing set of cardinality $\ceil{2n\log n/c}$.
\end{lemma}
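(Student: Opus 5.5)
We follow Babai's probabilistic argument: choose a random set $S$ of $m=\ceil{2n\log n/c}$ vertices and show that, with positive probability, every pair of distinct vertices is distinguished. Throughout, $\log$ is the natural logarithm.

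It is convenient to build $S$ as a union of $m$ independent, uniformly random vertices $w_1,\dots,w_m$, chosen with replacement. Then $|S|\le m$, and we may pad $S$ with arbitrary extra vertices, since any superset of a distinguishing set is still distinguishing: enlarging $S$ either places $u$ or $v$ in $S$, or preserves the inequality $N^-(u)\cap S\neq N^-(v)\cap S$.

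Fix distinct $u,v$. The pair fails to be distinguished only if no $w_i$ lies in $N^-(u)\triangle N^-(v)$. Indeed, if some $w_i$ lies in the symmetric difference, then exactly one of $u,v$ has $w_i$ as an in-neighbour, so $N^-(u)\cap S\neq N^-(v)\cap S$. Each $w_i$ avoids this set with probability at most $1-c/n$, so
\[
\Pr[\{u,v\}\text{ not distinguished}]\le (1-c/n)^m\le e^{-cm/n}\le e^{-2\log n}=n^{-2}.
\]
There are $\binom{n}{2}<n^2/2$ pairs, so the union bound gives a failure probability of at most $\binom n2 n^{-2}<1/2<1$. Hence some choice of $S$ with $|S|\le m$ is distinguishing, and padding it gives a distinguishing set of cardinality exactly $m$.

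It remains to check that $m\le n$, so that padding up to exactly $m$ is possible. This follows from the hypothesis $c>2\log n$, which gives $2n\log n/c<n$, and hence $\ceil{2n\log n/c}\le n$. This is presumably why the hypothesis appears in the statement. The main obstacle is only this bookkeeping: choosing with replacement to get independence, then arguing that padding is harmless and that $m\le n$. If one prefers to choose a uniform $m$-subset directly, the same estimate holds because $\binom{n-c}{m}/\binom nm\le(1-c/n)^m$.
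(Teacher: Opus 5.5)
Your argument is correct and is essentially the paper's proof. Both take a random set of size $\lceil 2n\log n/c\rceil$, show that a fixed pair is undistinguished with probability less than $\binom{n}{2}^{-1}$, and finish with a first-moment (union) bound; $c>2\log n$ is used only to guarantee $\lceil 2n\log n/c\rceil\le n$. The differences are cosmetic: you sample with replacement and pad using monotonicity, and your direct estimate $(1-c/n)^m\le e^{-cm/n}$ (equivalently $\binom{n-c}{m}/\binom{n}{m}\le(1-c/n)^m$) is cleaner than the paper's chain $e^{c/n}<1+c/n+c^2/n^2<1+c/(n-c-i)$.
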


\begin{proof}
    Let $s$ be an integer and choose a subset $S\subseteq V(D)$ of cardinality $s$ at random. For distinct vertices $u$ and $v$, we let $A(u,v)$ denote the event that $S$ does not distinguish $u$ and $v$. We let $P(u,v)$ denote the probability of the event $A(u,v).$ If $S$ does not distinguish $u$ and $v$, then both $u$ and $v$ are not in $S$ and $N^-(u)\cap S =N^-(v)\cap S.$ That is, if $S$ does not distinguish $u$ and $v$, then the $s$ vertices in $S$ must have been chosen from the (at most) $n-c-2$ vertices in $V(D)\setminus (\{u,v\}\cup (N^-(u) \triangle N^-(v))).$
    
    It follows that 
    \[ \left. P(u,v)\leq \binom{n-c-2}{s}\middle/ \binom{n}{s}<\binom{n-c}{s} \middle/\binom{n}{s}. \right. \]
    Let $N$ be the number of pairs $u,v$ such that $A(u,v)$ holds. Then
    \[ \left. E(N)=\sum_{\{u,v\}} P(u,v)\leq \binom{n}{2}\binom{n-c}{s}\middle/\binom{n}{s}. \right. \]
Now let $s=\ceil{2n\log n/c}$.  Since $c>2\log n$, it follows that $s\leq n$.  Also, we observe that
$$ \frac{sc}{n} > 2\log n - \log 2$$
from which it follows that
\[ {n\choose 2} < \frac{n^2}{2} < \exp(c/n)^s < \left( 1 + \frac{c}{n} + \frac{c^2}{n^2} \right)^s < \prod_{i=0}^{s-1} \left( 1+ \frac{c}{n-c-i} \right) = {n \choose s} \biggm/ {n-c \choose s}. \]
(Note that $e^x< 1+x+x^2$ for $0<x<1$; the final inequality can be proved by induction on $s$.)  

As a result, we can conclude that $E(N)<1$ and hence $\textrm{Prob}(N=0)>0.$ Since $S$ is a distinguishing set if and only if $N=0$, it follows that a distinguishing set of cardinality $s$ exists.
\end{proof}


\subsection{Normally regular digraphs}

In this section, we consider {\em normally regular digraphs} as defined by J{\o}rgensen in a 2015 paper~\cite{nrd} (which developed from a much earlier unpublished manuscript, dating from 1994~\cite{nrd-old}).  These are as follows.

\begin{definition}
Let $D$ be a digraph with $n$ vertices.  We say that $D$ is a {\em normally regular digraph} if there exist integers $k$, $\lambda$, $\mu$ such that each vertex has in-degree and out-degree $k$, and for every pair of vertices $x$ and $y$ the number of common out-neighbours is $\lambda$ if there is a single arc between $x$ and $y$, $\mu$ if there is no such arc, or $2\lambda-\mu$ if there are arcs in both directions.
\end{definition}

We will refer to such a digraph $D$ as an $\NRD(n,k,\lambda,\mu)$.  The reason for the requirement for the vertices of a digon to have $2\lambda-\mu$ common out-neighbours is to allow the adjacency matrix $A$ of $D$ to satisfy the matrix equation
\[ AA^t = kI + \lambda(A+A^t) + \mu(J-I-A-A^t) \]
(where $J$ denotes the all-ones matrix), which is analogous to the well-known one for strongly regular graphs (see~\cite[Theorem 9.1.2]{BH}).

\begin{example}
The Cayley digraph of the quaternion group $Q_8=\{\pm 1, \pm i, \pm j, \pm k\}$ with connection set $\{i,j,k\}$, depicted in Figure~\ref{q8}, is an $\NRD(8,3,1,0)$ (see \cite[Example 2]{nrd}).  It is straightforward to verify that $\{1,i,j\}$ is a resolving set of least size, and thus that its metric dimension is~$3$.
\end{example}

\begin{figure}[h] 
\centering 
\begin{tikzpicture}

 \def \n {8}
    \node[draw, circle, fill=black,label=above:{$i$}] (3) at ({360/\n * (3 - 1)}:4.4cm) {};
    \node[draw, circle, fill=black,label=above right:{$j$}] (2) at ({360/\n * (2 - 1)}:4.4cm) {};
    \node[draw, circle, fill=black,label=right:{$1$}] (1) at ({360/\n * (1 - 1)}:4.4cm) {};
    \node[draw, circle, fill=black,label=below right:{$k$}] (8) at ({360/\n * (8 - 1)}:4.4cm) {};
    \node[draw, circle, fill=black,label=below:{$-i$}] (7) at ({360/\n * (7 - 1)}:4.4cm) {};
    \node[draw, circle, fill=black,label=below left:{$-j$}] (6) at ({360/\n * (6 - 1)}:4.4cm) {};
    \node[draw, circle, fill=black,label=left:{$-1$}] (5) at ({360/\n * (5 - 1)}:4.4cm) {};
    \node[draw, circle, fill=black,label=above left:{$-k$}] (4) at ({360/\n * (4 - 1)}:4.4cm) {};
\path [fixed point arithmetic, thick, draw=black,
         decorate,
         postaction={
    on each segment={decoration={markings,mark=at position 0.5 with {\arrow[line width =3pt]{stealth}}},decorate}}]
(1) -- (8)
(1) -- (2)
(1) -- (3)
(2) -- (4)
(2) -- (5)
(2) -- (3)
(3) -- (8)
(3) -- (6)
(3) -- (5)
(4) -- (3)
(4) -- (1)
(4) -- (6)
(5) -- (6)
(5) -- (4)
(5) -- (7)
(6) -- (1)
(6) -- (7)
(6) -- (8)
(7) -- (1)
(7) -- (2)
(7) -- (4)
(8) -- (5)
(8) -- (7)
(8) -- (2)
;

\end{tikzpicture}

\caption{This Cayley digraph of the quaternion group $Q_8$ is an $\NRD(8,3,1,0)$.}
\label{q8}
\end{figure}

A normally regular digraph is {\em asymmetric} if it contains no digons; we denote such digraphs by $\ANRD(n,k,\lambda,\mu)$.  An ANRD where $\lambda=\mu$ is a {\em doubly regular asymmetric digraph} (DRAD), as studied by Ito~\cite{Ito88}; many of the known constructions of NRDs are in fact DRADs (see~\cite{nrd}).  Also, a DRAD with $k=\frac{n-1}{2}$ and $\lambda=\mu=\frac{n-3}{4}$ is precisely a doubly regular tournament as discussed above.


Using the definition of NRDs, it is straightforward to obtain a lower bound on $|N^-(u) \triangle N^-(v)|$ in terms of the parameters, and thus to apply Lemma~\ref{distset} to obtain bounds on the metric dimension and localization number.  We have the following result.

\begin{theorem} \label{NRD-easy}
Let $D$ be an $\NRD(n,k,\lambda,\mu)$, and let $t=\min\{\lambda,\mu,2\lambda-\mu\}$.  Then, provided that $k-t>\log n$, we have $\zeta(D)\leq \dim(D) \leq  \ceil{n\log n/(k-t)}$.
\end{theorem}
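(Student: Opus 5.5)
The plan is to apply Lemma~\ref{distset} directly. We compute $|N^-(u)\triangle N^-(v)|$ for every pair of distinct vertices in terms of the parameters, extract a uniform lower bound $c$, and check that $c>2\log n$. Once the lemma gives a distinguishing set $S$ of size $\ceil{2n\log n/c}$, the chain $|S|\ge\dim(D)\ge\zeta(D)$ from the introduction finishes the argument. With $c=2(k-t)$, the hypothesis $k-t>\log n$ is exactly $c>2\log n$, and $\ceil{2n\log n/c}=\ceil{n\log n/(k-t)}$.

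The first step is to move from out-neighbours to in-neighbours. The definition of an $\NRD$ prescribes common \emph{out}-neighbour counts, namely the entries of $AA^t$, whereas the lemma needs common \emph{in}-neighbour counts, the entries of $A^tA$. I would show that $A$ is normal, i.e.\ $AA^t=A^tA$; this is what the name refers to. The route is to use the identity $AA^t = kI + \lambda(A+A^t) + \mu(J-I-A-A^t)$. Since $D$ is $k$-regular in both directions, $A$ commutes with $J$. Hence $AA^t$ is a polynomial expression in $A$, $A^t$, $I$ and $J$ that is symmetric under swapping $A$ and $A^t$. From this I would deduce that $A^tA$ satisfies the same equation, for instance by comparing $A(AA^t)$ with $(AA^t)A$, or by citing J{\o}rgensen~\cite{nrd}, where normality is established. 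It follows that the number of common in-neighbours of $u$ and $v$ equals their number of common out-neighbours, which is one of $\lambda$, $\mu$, $2\lambda-\mu$, depending on whether there is a single arc, no arc, or a digon between $u$ and $v$.

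Next, since every in-degree equals $k$, we have $|N^-(u)\triangle N^-(v)| = 2k-2|N^-(u)\cap N^-(v)|$. This is $2(k-m_{uv})$, where $m_{uv}\in\{\lambda,\mu,2\lambda-\mu\}$ is the parameter attached to the pair. Taking the extremal value over the three cases gives the uniform constant $c=2(k-t)$, and substituting into Lemma~\ref{distset} gives the stated bound.

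I expect the main obstacle to be this step: making sure the extremal parameter that controls $c$ is the one named $t$ in the statement. For the lower bound on the symmetric difference one needs an \emph{upper} bound on $m_{uv}$. So I would check carefully, case by case according to the sign of $\lambda-\mu$ and whether digons or non-adjacent pairs actually occur in $D$, that the value entering $c$ agrees with $t=\min\{\lambda,\mu,2\lambda-\mu\}$ as the statement is worded. The normality step is the other point needing care, but it is standard.
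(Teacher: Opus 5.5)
Your route is the paper's: write $|N^-(u)\triangle N^-(v)|=2(k-m_{uv})$ with $m_{uv}\in\{\lambda,\mu,2\lambda-\mu\}$, then feed $c=2(k-t)$ into Lemma~\ref{distset}. Your normality step is a genuine addition, since the paper uses without comment that common in-neighbour counts equal common out-neighbour counts. However, comparing $A(AA^t)$ with $(AA^t)A$ only gives $AN=(\lambda-\mu)N$ for $N=AA^t-A^tA$. To reach $N=0$, expand $\mathrm{tr}(N^2)=\mathrm{tr}(N\,AA^t)-\mathrm{tr}(ANA^t)$ using the defining identity, $AN=(\lambda-\mu)N$, $NJ=0$ and $\mathrm{tr}\,N=0$; this gives $\mathrm{tr}(N^2)=0$. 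Alternatively, simply cite J{\o}rgensen.

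The genuine gap is the step you flagged, and it cannot be closed for the statement as worded. A uniform lower bound on $2(k-m_{uv})$ needs $t\ge m_{uv}$ for every pair, so $t$ must be a \emph{maximum}. With $t=\min\{\lambda,\mu,2\lambda-\mu\}$, the quantity $2(k-t)$ is an upper bound instead, and no case analysis rescues it.

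In fact the statement is false as written. Take vertex set $\mathbb{Z}_3\times\{1,2,3,4\}$ with arcs $(x,i)\to(x+1,j)$ for all $x,i,j$. This is an $\NRD(12,4,0,4)$ without digons: arc-joined pairs share no out-neighbour, and pairs in the same block share all four. So $t=\min\{0,4,-4\}=-4$, $k-t=8>\log 12$, and the claimed bound is $\ceil{12\log 12/8}=4$. Even $t=0$, the least parameter actually realized, only gives $\ceil{12\log 12/4}=8$. But two vertices in the same block have equal in-neighbourhoods, hence equal distance from every other vertex. So every resolving set contains at least three vertices of each block, and $\dim(D)\ge 9$.

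The paper's proof makes exactly the same slip: it asserts $|N^-(u)\triangle N^-(v)|\ge 2k-2t$ with $t$ the minimum. Its later remark that for ANRDs ``it suffices to let $t=\max\{\lambda,\mu\}$'', and the parallel theorems using $\max\{a,b,c\}$ and $\max\{\lambda_1,\lambda_2\}$, show that $t=\max\{\lambda,\mu,2\lambda-\mu\}$ is intended. Since $\mu,\lambda,2\lambda-\mu$ form an arithmetic progression, this equals $2\lambda-\mu$ if $\lambda\ge\mu$ and $\mu$ otherwise; you may also restrict the maximum to the pair types that occur in $D$.

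With that correction, $c=2(k-t)$ is a valid lower bound and your argument is complete. In the example above the corrected $t$ equals $k$, so the hypothesis $k-t>\log n$ fails, as it must for a digraph with twins.
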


\begin{proof}
Since $D$ is an $\NRD(n, k, \lambda, \mu)$, we know that for any vertices $u, v$, $|N^-(u) \triangle N^-(v)| = 2k-2\lambda$ (if there is one arc between $u$ and $v$), $2k-2\mu$ (if there is no such arc) or $2k-2(2\lambda-\mu)$ (if there are arcs in both directions).  So we have $|N^-(u) \triangle N^-(v)|\geq 2k-2t$, and can therefore apply Lemma~\ref{distset} to complete the proof.
\end{proof}

In the case of ANRDs, where there are no digons, the third possibility for $|N^-(u) \triangle N^-(v)|$ does not arise, so it suffices to let $t=\max\{\lambda,\mu\}$; in the case of DRADs, we have $\lambda=\mu$ and thus the bound simplifies to $\zeta(D)\leq \dim(D) \leq  \ceil{n\log n/(k-\lambda)}$. In the case of DRTs, this simplifies further to $\ceil{4n\log n / (n+1)} < 4\log n$, concurring with the $O(\log n)$ bound for the localization number of DRTs given in~\cite[Corollary 17]{BCMP}.

However, we can obtain a stronger result, again by adapting the techniques from~\cite{LB} to obtain estimates on $|N^-(u) \triangle N^-(v)|$ solely in terms of the number of vertices (provided that $\lambda,\mu>0$).  We do this in Lemma~\ref{sd} below; the proof of this requires the following two lemmas from the literature.

\begin{lemma}{\em\cite{nrd}}\label{mindegree}
    If $D$ is a $\NRD(n, k, \lambda, \mu)$ with $n\geq 2k+1$ or an $\ANRD(n, k, \lambda, \mu)$ then $$2k\lambda + (n-2k-1)\mu=k^2-k.$$
\end{lemma}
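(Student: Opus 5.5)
The identity comes from double counting the triples $(x,y,z)$ in which $x$ is fixed, $y\neq x$, and $z$ is a common out-neighbour of $x$ and $y$. Equivalently, we evaluate the row sums of the matrix equation
\[ AA^t = kI + \lambda(A+A^t) + \mu(J-I-A-A^t). \]
Multiplying on the right by the all-ones vector $\mathbf{1}$ gives $A A^t\mathbf{1} = A(k\mathbf{1}) = k^2\mathbf{1}$, because every vertex has in-degree and out-degree $k$. On the right-hand side we get
\[ k + 2k\lambda + \mu(n-1-2k), \]
since $A\mathbf{1}=A^t\mathbf{1}=k\mathbf{1}$. Equating the two sides gives $k^2 = k + 2k\lambda + (n-2k-1)\mu$, which is the claim.

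The one real step is to check that the matrix equation actually holds under the hypotheses, or to avoid it and count directly. I will count directly for a fixed vertex $x$. Each out-neighbour $z$ of $x$ has in-degree $k$, so it has $k-1$ in-neighbours $y\neq x$. Summing over the $k$ choices of $z$ gives $k(k-1)$ pairs $(y,z)$.

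Now group these pairs by $y$. Let $a$ be the number of vertices joined to $x$ by exactly one arc, $b$ the number joined by a digon, and $c$ the number not adjacent to $x$. The count becomes
\[ a\lambda + b(2\lambda-\mu) + c\mu. \]
Since $x$ has out-degree $k$ and in-degree $k$, we have $a+2b = 2k$ (each digon uses one in-arc and one out-arc at $x$) and $a+b+c = n-1$. Hence $c = n-1-2k+b$. Substituting,
\[ a\lambda + b(2\lambda-\mu) + c\mu = (a+2b)\lambda + (c-b)\mu = 2k\lambda + (n-2k-1)\mu, \]
and the $b$ terms cancel. This yields the identity for every NRD.

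The hypotheses $n\geq 2k+1$ or asymmetric are not needed for this computation. They presumably serve only to ensure $n-2k-1\geq 0$, i.e.\ that the $\mu$-case actually occurs so the parameter $\mu$ is meaningful; in the ANRD case $b=0$ and $c=n-1-2k\ge 0$ automatically. The only point needing care is the bookkeeping $a+2b=2k$. I expect no obstacle beyond that.
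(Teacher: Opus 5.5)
Your proof is correct. The paper gives no proof of its own here, since the lemma is quoted from J{\o}rgensen. Your argument is the standard one: take row sums of $AA^t = kI + \lambda(A+A^t) + \mu(J-I-A-A^t)$, or equivalently count pairs $(y,z)$ for a fixed $x$. The direct count is the better of your two versions, because it works straight from the paper's definition without first verifying the matrix equation. Its bookkeeping is right: $a+2b=2k$ and $c=n-1-2k+b$, and the digon terms cancel.

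You are also right that, under this paper's definition (which builds in in-degree $k$), neither $n\ge 2k+1$ nor asymmetry is used. Your guess about the role of that hypothesis is off, though. The condition $n\ge 2k+1$ does not force non-adjacent pairs to exist: a doubly regular tournament has $n=2k+1$ and $c=0$.

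A more plausible explanation starts from the fact that in J{\o}rgensen's setting an NRD is defined by the matrix equation alone. The diagonal of that equation forces out-degree $k$, but not in-degree $k$. Write $d=A^t\mathbf{1}$ for the in-degree vector. Your row-sum computation then gives
\[ Ad=\bigl(k+\lambda k+\mu(n-1-k)\bigr)\mathbf{1}+(\lambda-\mu)d. \]
Multiplying on the left by $\mathbf{1}^t$ gives $\|d\|^2=n\bigl(k+2k\lambda+(n-2k-1)\mu\bigr)$. Cauchy--Schwarz gives $\|d\|^2\ge nk^2$, with equality exactly when $d$ is constant. So in that framework the identity holds precisely when the digraph is in-regular, and that is presumably where a hypothesis of this kind enters.
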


In particular, when $\mu, \lambda >0$ it follows that $k\geq \sqrt{n-1}$ as $k^2 \geq k^2-k = 2k\lambda + (n-2k-1)\mu > 2k + (n-2k-1) = n-1$ when $\lambda, \mu \geq 1$.

\begin{lemma}{\em\cite{LB}}\label{hypergraphs}
    Let $1\leq d <r<n$ be integers, $|V|=n$, and $H=(V, \textbf{F})$ be a nonempty regular $r$-uniform hypergraph such that $|E\cap F|\geq d$ for any $E, F\in \textbf{F}$. Then $r^2>nd$.
\end{lemma}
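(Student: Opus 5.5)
We prove Lemma~\ref{hypergraphs} by double counting on the incidence structure of $H$. Let $m=|\mathbf{F}|$, and let every vertex of $V$ lie in exactly $k$ edges (regularity). Counting incident pairs (vertex, edge) gives $nk=mr$. Fix an edge $E\in\mathbf{F}$ and count pairs $(x,F)$ with $x\in E\cap F$ and $F\in\mathbf{F}\setminus\{E\}$. Counting by $x$, each of the $r$ vertices of $E$ lies in $k-1$ further edges, so the total is $r(k-1)$. Counting by $F$, each of the $m-1$ other edges contributes $|E\cap F|\geq d$. Hence
\[ r(k-1)\geq d(m-1). \]
(If $m=1$ the hypergraph is a single edge; regularity forces it to cover all vertices, i.e.\ $r=n$, contradicting $r<n$. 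So $m\geq 2$. In any case the inequality is only used in the form below.)

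Next we substitute $m=nk/r$, which gives $r(k-1)\geq d(nk/r-1)$. Multiplying by $r$ yields
\[ r^2(k-1)\geq dnk-dr, \qquad\text{i.e.}\qquad k(r^2-nd)\geq r^2-dr=r(r-d). \]
Since $d<r$, the right-hand side is strictly positive, and $k>0$ because $H$ is nonempty. Therefore $r^2-nd>0$, which is exactly $r^2>nd$.

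The main point needing care is the strictness of the inequality. This is supplied by $d<r$, through the term $r(r-d)>0$, rather than by the averaging step itself. We also need $k\geq1$, which follows from nonemptiness together with regularity. The only other subtlety is to exclude $F=E$ from the count, since $|E\cap E|=r$; this exclusion is what produces the $-1$ terms that give strictness. The hypothesis $r<n$ is only needed to rule out degenerate cases and is not otherwise used.
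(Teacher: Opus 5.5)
Your argument is correct. Fixing an edge gives $r(k-1)\ge d(m-1)$; combining this with $nk=mr$ yields $k(r^2-nd)\ge r(r-d)>0$, and $k\ge 1$ finishes the proof. The paper does not reprove this lemma but quotes it from Babai, and your double count of incidences is essentially the standard argument for it, so there is nothing to add.
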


The following result establishes a lower bound on the size of the symmetric difference of the in-neighbourhoods of any two vertices.

\begin{lemma}\label{sd}
    Let $D$ be a $\ANRD(n, k, \lambda, \mu)$. If $\lambda, \mu > 0$, then $$|N^-(u) \triangle N^-(v)| \geq \sqrt{n}-1$$ for any pair of distinct vertices $u$ and $v$.
\end{lemma}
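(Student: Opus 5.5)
The plan is to reduce the in-neighbourhood symmetric difference to the parameters $k,\lambda,\mu$. Then Lemma~\ref{mindegree} together with $\lambda,\mu\ge 1$ should give the bound directly; I do not expect to need the hypergraph Lemma~\ref{hypergraphs}.

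\emph{Step 1.} For distinct $u,v$ we have $|N^-(u)\triangle N^-(v)| = 2k - 2|N^-(u)\cap N^-(v)|$, since all in-degrees equal $k$. The definition only controls common \emph{out}-neighbours, so the first task is to show that the number of common in-neighbours is also $\lambda$ or $\mu$. In an ANRD there are no digons, so this number is $\lambda$ if there is an arc between $u$ and $v$ and $\mu$ otherwise. In matrix terms this is the identity $A^tA = AA^t$, i.e.\ normality of $A$, which is the origin of the name ``normally regular'' and is proved in~\cite{nrd}. I would cite it from there.

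If a self-contained argument is wanted, I would note that the right-hand side $kI+\lambda(A+A^t)+\mu(J-I-A-A^t)$ is a polynomial in $A$, $A^t$ and $J$. I would then check that $A$ commutes with it using $AJ=JA=kJ$; this identifies $A^tA$ with the same expression via the standard argument in~\cite{nrd}. This is the only step where I rely on something outside the lemmas stated so far, and I regard it as the main (though mild) obstacle. Granting it, every pair satisfies
\[ |N^-(u)\triangle N^-(v)|\ \ge\ 2\bigl(k-\max\{\lambda,\mu\}\bigr). \]

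\emph{Step 2.} Bound $k-\max\{\lambda,\mu\}$ using Lemma~\ref{mindegree}, namely $2k\lambda+(n-2k-1)\mu=k^2-k$. Since $D$ is asymmetric, $2k\le n-1$, so both terms on the left are nonnegative.

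If $\lambda\ge\mu$, drop to $(n-2k-1)\mu\ge n-2k-1$ (using $\mu\ge1$). This gives $2k\lambda\le k^2-k-(n-2k-1)$, hence $\lambda\le (k-1)/2$ and $k-\lambda\ge (k+1)/2$.

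If $\mu>\lambda$, use $\lambda\ge 1$, which gives $2k\lambda\ge 2k$. Then $(n-2k-1)\mu\le k^2-3k$, and since $\mu\ge1$ this forces $n-2k-1<k^2$. A cleaner route is the lower bound $2k\lambda+(n-2k-1)\mu\ge(n-1)\min\{\lambda,\mu\}$ — but that only bounds the minimum. So instead I would write $\mu(n-2k-1)=k^2-k-2k\lambda\le k^2-3k$. Combined with $\mu>\lambda\ge1$ and $n-2k-1\ge$ (a quantity forced positive by $\mu\ge 2$), this should give $\mu\le k/2$, i.e.\ $k-\mu\ge k/2$. The degenerate case $n=2k+1$ (tournaments) is handled separately. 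There the identity reads $2k\lambda=k^2-k$, so $\lambda=(k-1)/2$, and only arcs occur, so only $\lambda$ matters.

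\emph{Step 3.} In all cases $2(k-\max\{\lambda,\mu\})\ge k$. By the remark after Lemma~\ref{mindegree}, $k\ge\sqrt{n-1}\ge\sqrt n-1$, which completes the proof.

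If the estimate $\mu\le k/2$ in the case $\mu>\lambda$ turns out not to follow so simply, my fallback is Babai's argument. I would apply Lemma~\ref{hypergraphs} to the $k$-uniform, $k$-regular hypergraph of out-neighbourhoods, whose pairwise intersections are at least $\min\{\lambda,\mu\}\ge1$, to get $k^2>n\min\{\lambda,\mu\}$. I would then combine this with the identity of Lemma~\ref{mindegree} to bound $\mu$ away from $k$ by a constant times $\sqrt n$.
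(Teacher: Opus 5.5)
\paragraph{Where the proposal breaks.} Your Step~1 and your case $\lambda\ge\mu$ are sound, and that case is simpler than the paper's route. Lemma~\ref{mindegree} with $\mu\ge 0$ and $n\ge 2k+1$ gives $\lambda\le (k-1)/2$. Hence every pair has $|N^-(u)\triangle N^-(v)|\ge 2(k-\lambda)\ge k+1>\sqrt{n}-1$, and Lemma~\ref{hypergraphs} is never needed.

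The gap is the case $\mu>\lambda$. There your estimate $\mu\le k/2$ is false. Your fallback of bounding $k-\mu$ below by a multiple of $\sqrt{n}$ is also impossible, because $\mu=k$ can occur with $\lambda>0$. Let $T$ be a doubly regular tournament of order $m\ge 7$ (e.g.\ $\overrightarrow{P_7}$), and take its coclique extension $\mathcal{C}_r(T)$ (the type~I team tournaments). Its vertices are pairs $(x,i)$ with $x\in V(T)$ and $1\le i\le r$, with an arc $(x,i)\to(y,j)$ iff $x\to y$ in $T$.

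\begin{itemize}
\item This is an $\ANRD\bigl(rm,\ r(m-1)/2,\ r(m-3)/4,\ r(m-1)/2\bigr)$, so $\lambda,\mu>0$ but $\mu=k$.
\item Concretely, $\mathcal{C}_2(\overrightarrow{P_7})$ is an $\ANRD(14,6,2,6)$. It is consistent with Lemma~\ref{mindegree}, since $2\cdot 6\cdot 2+1\cdot 6=30=6^2-6$.
\item The non-adjacent vertices $u=(x,1)$ and $v=(x,2)$ have identical in-neighbourhoods, so $|N^-(u)\triangle N^-(v)|=0<\sqrt{14}-1$.
\end{itemize}

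So the statement itself fails under the given hypotheses, and no argument can close your second case. An extra hypothesis excluding at least $\mu=k$ is required; this is the directed analogue of Babai's exclusion of complete multipartite graphs. If you add the hypothesis $\mu\le\lambda$, your proof is already complete.

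\paragraph{The paper's proof.} The paper's proof breaks at the corresponding step. It asserts $k-\lambda\le 2(k-\mu)$ using a vertex $w$ with $u\to w$ and $v\to w$. With that choice of $w$, the triangle argument yields only the trivial $k-\lambda\le 2(k-\lambda)$; one would need a $w$ non-adjacent to both $u$ and $v$. In $\mathcal{C}_2(\overrightarrow{P_7})$ the asserted inequality reads $4\le 0$. Consequently the paper's intermediate bound $|N^-(u)\triangle N^-(v)|\ge k-\min\{\lambda,\mu\}$ fails for non-adjacent pairs.

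The same examples contradict Theorem~\ref{ANRD}. In $\mathcal{C}_r(\overrightarrow{P_7})$, any resolving set must contain all but one vertex of each twin class, so $\dim\ge n-7$. This exceeds $\ceil{2n\log n/(\sqrt{n}-1)}$ once $r$ is large; for example, $r=100$ gives $693>361$.
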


\begin{proof}
    Let $\lambda, \mu$ be positive integers. Since $D$ is an $\ANRD(n, k, \lambda, \mu)$, we know that $\sqrt{n-1}\leq k \leq (n-1)/2.$ We also know that for any vertices $u, v$, $|N^-(u) \triangle N^-(v)| = 2k-2\lambda$ (if there is one arc between the vertices) or $2k-2\mu$ (if there is no arc connecting the vertices).

   First, let $u,v$ be distinct vertices such that there is no arc between $u$ and $v$, and choose a vertex $w$ such that there are arcs from $u$ to $w$ and from $v$ to $w$ (such a vertex exists since $\mu>0$). We have that $|N^-(u) \setminus N^-(v)|=k-\mu$ and $|N^-(u) \setminus N^-(w)|=|N^-(w) \setminus N^-(v)|=k-\lambda$.  We can show that 
	$$N^-(u)\setminus N^-(v) \subseteq (N^-(u)\setminus N^-(w))\cup (N^-(w) \setminus N^-(v)).$$
	It follows that $k-\mu \leq 2(k-\lambda)$.

    Next, consider vertices $u,v$ such that there is an arc from $u$ to $v$, and choose a vertex $w$ such that there are arcs from $u$ to $w$ and from $v$ to $w$ (such a vertex exists since $\lambda>0$).  A similar argument to the above shows that $k-\lambda \leq 2(k-\mu)$.
		
		Now let $d=\min \{\lambda, \mu\}.$ From Lemma~\ref{mindegree}, we observe that $k\geq 2\lambda$.  Also, since $k \leq (n-1)/2$, it follows that $n\geq 4\lambda \geq 4d$ and thus $d<\frac{n}{4}.$  Moreover, for any pair of distinct vertices $u$ and $v$, if $u$ and $v$ are adjacent we have
		$$|N^-(u)\triangle N^-(v)|= |N^-(u)|+|N^-(v)|-2|N^-(u)\cap N^-(v)|)= 2(k-\lambda)\geq k-\mu,$$  
	while if $u$ and $v$ are non-adjacent we have
		$$|N^-(u)\triangle N^-(v)|= |N^-(u)|+|N^-(v)|-2|N^-(u)\cap N^-(v)|)= 2(k-\mu)\geq k-\lambda.$$
		In both cases, we have $|N^-(u)\triangle N^-(v)| \geq k-d$.  We want to show that $k-d>\sqrt{n}-1$.

    If $d=0,$ then $k-d=k\geq \sqrt{n-1}>\sqrt{n}-1$, as required. Otherwise, suppose that $d>0$. We can apply Lemma \ref{hypergraphs} to the non-empty $k$-uniform hypergraph $\displaystyle{\left( V(D),\, \{N^-(v): v\in V(D)\} \right)}$ to see that $k>\sqrt{nd}$. It follows that $$k-d>\sqrt{nd}-d.$$  We want to show that $\sqrt{nd}-d>\sqrt{n}-1,$ or equivalently, that $$\sqrt{n}(\sqrt{d}-1)\geq d-1.$$ Since $n\geq 4d,$ it suffices to show that $\sqrt{4d}(\sqrt{d}-1)\geq d-1.$ Simplifying this expression gives $$d+1\geq 2\sqrt{d}.$$ After squaring both sides and rearranging, we get $$(d-1)^2\geq 0,$$ which is true for any $d\geq 1.$ Hence, $$|N^-(u)\triangle N^-(v)| >k-d >\sqrt{n}-1$$ as required. 
\end{proof}

We can now prove the following bound for asymmetric normally regular digraphs.

\begin{theorem}\label{ANRD} 
Let $D$ be an $\ANRD(n, k, \lambda, \mu)$ such that $\lambda, \mu >0$ and $n\geq 108$.
Then $D$ has a distinguishing set of cardinality $\ceil{ 2n\log n / (\sqrt{n}-1) }$ and hence $\zeta(D)\leq \dim(D) \leq \ceil{ 2n\log n / (\sqrt{n}-1) }$.
\end{theorem}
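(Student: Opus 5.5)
The plan is to apply Lemma~\ref{distset} with $c=\sqrt{n}-1$, using Lemma~\ref{sd} for the hypothesis. Since $D$ is an $\ANRD(n,k,\lambda,\mu)$ with $\lambda,\mu>0$, Lemma~\ref{sd} gives $|N^-(u)\triangle N^-(v)|\geq \sqrt{n}-1$ for every pair of distinct vertices $u,v$. So the only remaining hypothesis of Lemma~\ref{distset} is the inequality $c>2\log n$, that is,
\[ \sqrt{n}-1>2\log n. \]
Once this holds, Lemma~\ref{distset} produces a distinguishing set of cardinality $\ceil{2n\log n/(\sqrt{n}-1)}$. The chain $\zeta(D)\leq\dim(D)\leq|S|$ for any distinguishing set $S$, observed in the introduction, then gives both conclusions.

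The main (and essentially only) obstacle is verifying $\sqrt{n}-1>2\log n$ for all $n\geq 108$, with $\log$ the natural logarithm as in the proof of Lemma~\ref{distset}. I would set $f(x)=\sqrt{x}-1-2\log x$ and handle it in two steps.

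\begin{enumerate}
\item \emph{Monotonicity.} We have $f'(x)=\frac{1}{2\sqrt{x}}-\frac{2}{x}$, which is positive exactly when $\sqrt{x}>4$, i.e.\ when $x>16$. So $f$ is increasing on $[108,\infty)$.
\item \emph{Base value.} It suffices to check $f(108)>0$. Numerically, $\sqrt{108}\approx 10.392$, so $\sqrt{108}-1\approx 9.392$, while $2\log 108\approx 2(4.682)=9.364$. Hence $f(108)>0$, though only barely.
\end{enumerate}

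The narrow margin at $n=108$ suggests this threshold is exactly where the inequality starts to hold, which explains the hypothesis $n\geq 108$. For a rigorous write-up I would certify $f(108)>0$ with explicit rational bounds: $\sqrt{108}>10.39$, and $\log 108=2\log 2+3\log 3<4.6821$. Combined with the monotonicity, this gives $\sqrt{n}-1>2\log n$ for all $n\geq 108$, so Lemma~\ref{distset} applies and the theorem follows.
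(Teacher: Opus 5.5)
Your proposal is correct and follows the paper's proof exactly. Lemma~\ref{sd} gives $c=\sqrt{n}-1$; the inequality $\sqrt{n}-1>2\log n$ for $n\geq 108$, which the paper simply asserts and your monotonicity argument justifies, lets you invoke Lemma~\ref{distset}; and the resulting distinguishing set bounds $\dim(D)$ and hence $\zeta(D)$.

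There is one small numerical slip: $\log 108=2\log 2+3\log 3\approx 4.68213$, so your certified bound $\log 108<4.6821$ is false. The fix is immediate, since $\log 108<4.69$ together with $\sqrt{108}>10.39$ still gives $\sqrt{108}-1>9.39>9.38>2\log 108$.
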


\begin{proof}
    By Lemma \ref{sd}, if $D$ is an $\ANRD(n, k, \lambda, \mu)$ such that $\lambda, \mu>0$, then for any pair of distinct vertices $u$ and $v$ we have $|N^-(u) \triangle N^-(v)| \geq \sqrt{n}-1$. Since $n\geq 108$ we have $\sqrt{n}-1>2\log n$, so we can apply Lemma \ref{distset}, from which it follows that $D$ has a distinguishing set $S$ of cardinality $\ceil{ 2n\log n / (\sqrt{n}-1) }$. Since any distinguishing set is also a resolving set, $\zeta(D)\leq \dim(D) \leq \ceil{ 2n\log n / (\sqrt{n}-1) }$.
\end{proof}

We note that Theorem~\ref{ANRD} gives an upper bound on $\zeta(D)$ and $\dim(D)$ of $O(\sqrt{n}\log n)$ for an ANRD $D$ with $n$ vertices.

In the special case of doubly regular asymmetric digraphs, the following bound can be derived as a corollary to Theorem~\ref{ANRD}.

\begin{corollary} \label{DRAD}
    Let $D$ be a doubly regular asymmetric digraph with parameters $n,k$ and $\lambda$ such that $n\geq 108$ and $\lambda>0$.
		Then $\zeta(D) \leq \dim(D)\leq \ceil{2n\log n / (\sqrt{n}-1)}$.
\end{corollary}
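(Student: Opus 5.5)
The plan is to obtain the corollary directly from Theorem~\ref{ANRD}, by checking that a DRAD meeting the stated hypotheses is an ANRD to which that theorem applies. By definition, a doubly regular asymmetric digraph with parameters $n,k,\lambda$ is an $\ANRD(n,k,\lambda,\mu)$ with $\mu=\lambda$. In other words, it has no digons, every vertex has in- and out-degree $k$, and any two distinct vertices have exactly $\lambda$ common out-neighbours. This holds both when there is a single arc between them and when there is none.

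The hypotheses of Theorem~\ref{ANRD} then need to be checked. We require $\lambda,\mu>0$; since $\mu=\lambda$ and $\lambda>0$ is assumed, this holds. The condition $n\geq 108$ is assumed outright. Theorem~\ref{ANRD} therefore yields a distinguishing set of cardinality $\ceil{2n\log n/(\sqrt{n}-1)}$. Since distinguishing sets are resolving sets, and $\zeta(D)\leq\dim(D)$ as noted in the introduction, the bound $\zeta(D)\leq\dim(D)\leq\ceil{2n\log n/(\sqrt{n}-1)}$ follows.

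There is essentially no obstacle here. The only point needing care is confirming that the DRAD definition really specialises the ANRD definition, with no digons and with the common out-neighbour count equal to $\lambda=\mu$ in both cases. Once that is checked, Lemma~\ref{sd}, used inside Theorem~\ref{ANRD}, applies verbatim.
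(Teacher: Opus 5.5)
Your proposal is correct and is exactly the paper's argument. A DRAD is by definition an $\ANRD(n,k,\lambda,\lambda)$, so the hypotheses $\lambda=\mu>0$ and $n\geq 108$ of Theorem~\ref{ANRD} are satisfied, and the stated bound follows immediately.
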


We conclude this section by mentioning a related class of digraphs, the rather unfortunately named ``ordinary graphs'' introduced in a 2004 paper of Fossorier, Je{\v{z}}ek, Nation and Pogel~\cite{ordinary} (see also~\cite{Kalk}).  These are defined as follows: a digraph $D$ is an \emph{ordinary graph} of type $\langle n,k,a,b,c \rangle$ if $D$ has $n$ vertices, is $k$-regular, and the number of common in-neighbours and out-neighbours between any pair of distinct vertices $x$ and $y$ is $a$ if there is no arc between $x$ and $y$, $b$ if there is a single arc between $x$ and $y$, and $c$ if there is an arc in both directions.  Using the same approach as with Theorem~\ref{NRD-easy}, we have the following result.

\begin{theorem} \label{ordinary}
    Let $D$ be an ordinary graph of type $\langle n,k,a,b,c \rangle$. Let $d=\max\{a,b,c\}.$ If $k-d >\log(n)$, then $\zeta(D) \leq\dim(D) \leq \ceil{n\log n/(k-d)}.$
\end{theorem}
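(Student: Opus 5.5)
Following the proof of Theorem~\ref{NRD-easy}, the plan is to bound $|N^-(u)\triangle N^-(v)|$ from below for every pair of distinct vertices, feed this bound into Lemma~\ref{distset}, and finish with the chain $\zeta(D)\leq\dim(D)\leq|S|$ for any distinguishing set $S$, as recorded in the introduction.

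\textbf{Step 1: interpret the parameters.} An ordinary graph of type $\langle n,k,a,b,c\rangle$ is $k$-regular, so every vertex has in-degree and out-degree $k$. Read literally, the definition says that the number of common in-neighbours of distinct $x,y$ equals $a$, $b$ or $c$, according to whether there is no arc, a single arc, or a digon between them (and likewise for common out-neighbours). In each of the three cases, therefore, $|N^-(x)\cap N^-(y)|\leq d=\max\{a,b,c\}$.

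\textbf{Step 2: bound the symmetric difference.} For any distinct $u,v$,
\[ |N^-(u)\triangle N^-(v)| = |N^-(u)|+|N^-(v)|-2|N^-(u)\cap N^-(v)| \geq 2k-2d. \]
So the hypothesis of Lemma~\ref{distset} holds with $c=2(k-d)$.

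\textbf{Step 3: apply Lemma~\ref{distset}.} The condition $c>2\log n$ in Lemma~\ref{distset} becomes $2(k-d)>2\log n$, which is exactly the assumption $k-d>\log n$. The lemma then gives a distinguishing set of size
\[ \ceil{2n\log n/c}=\ceil{n\log n/(k-d)}. \]
Any distinguishing set is a resolving set, so this yields $\zeta(D)\leq\dim(D)\leq\ceil{n\log n/(k-d)}$.

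The only real obstacle is Step~1: making sure the common-neighbour count in the definition of an ordinary graph really controls common \emph{in}-neighbours, since Lemma~\ref{distset} is phrased in terms of in-neighbourhoods. If the definition were instead read as controlling only common out-neighbours, I would apply Lemma~\ref{distset} to the reversed digraph $\widetilde{D}$. This gives a distinguishing set, and hence a resolving set, for $\widetilde{D}$ of the same size. I would then use the observation from the introduction, that a resolving set of $\widetilde{D}$ gives an optimal placement of cops in $D$, to conclude $\zeta(D)\leq\dim(\widetilde{D})$. However, this route does not bound $\dim(D)$ itself, and $\dim(D)$ and $\dim(\widetilde{D})$ need not coincide in general. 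So the argument relies on the literal reading, under which in-neighbour counts are bounded directly and the bound on $\dim(D)$ follows at once.
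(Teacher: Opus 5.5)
Your proof is correct and matches what the paper intends; the paper only says ``using the same approach as with Theorem~\ref{NRD-easy}'': the number of common in-neighbours is $a$, $b$ or $c$, so $|N^-(u)\triangle N^-(v)|\geq 2k-2d$, and Lemma~\ref{distset} with lower bound $2(k-d)$ gives the stated distinguishing set. The definition explicitly controls common in-neighbours as well as out-neighbours, so your closing discussion of the reversed digraph is unnecessary; just rename the lemma's constant, since it clashes with the ordinary-graph parameter $c$.
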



\newpage

\subsection{Deza digraphs and divisible design digraphs}
In this section, we consider some alternative generalizations of DRADs, and thus of DRTs.

In \cite{WF1}, Wang and Feng defined a $(n, k, a, b)$-\emph{Deza digraph} to be a $k$-regular digraph on $n$ vertices such that any two vertices have either $a$ or $b$ common out-neighbours, where $a\leq b$. 
See Figure~\ref{ddt2} for a $(6, 2, 0, 1)$-Deza digraph: we note that this is not an NRD.

\begin{figure}[h] 
\centering \begin{tikzpicture}[scale=1]

 \def \n {6}
\foreach \s in {1,...,6}
  \node[draw,scale=0.85, circle, fill=black,label=above:{}] (\s) at ({360/\n * (\s - 1)}:4.4cm) {};

\path [fixed point arithmetic, thick, draw=black,
         decorate,
         postaction={
    on each segment={decoration={markings,mark=at position 0.55 with {\arrow[line width =3pt]{stealth}}},decorate}}]
(1) -- (2)
(1) -- (3)
(2) -- (3)
(2) -- (4)
(3) -- (4)
(3) -- (5)
(4) -- (5)
(4) -- (6)
(5) -- (1)
(5) -- (6)
(6) -- (1)
(6) -- (2)
;

\end{tikzpicture}

\caption{A $(6, 2, 0, 1)$-Deza digraph}
\label{ddt2}
\end{figure}

The following upper bound on the metric dimension and localization number of Deza digraphs is an immediate consequence of Lemma~\ref{distset}.  The proof is almost identical to that of Theorem~\ref{NRD-easy}.

\begin{theorem}
    Let $D$ be a $(n, k, a, b)$-Deza digraph. If $k-b >\log(n)$ then $\zeta(D)\leq \dim(D) \leq \ceil{n\log n/(k-b)}$.
\end{theorem}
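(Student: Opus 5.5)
The plan is to follow the proof of Theorem~\ref{NRD-easy} verbatim, feeding a uniform lower bound on the symmetric difference of neighbourhoods into Lemma~\ref{distset}.

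First, fix distinct vertices $u,v$ of the $(n,k,a,b)$-Deza digraph $D$. By $k$-regularity, both neighbourhoods have exactly $k$ elements. The number of common neighbours of $u$ and $v$ is either $a$ or $b$, and $a\le b$, so it is at most $b$. Inclusion--exclusion then gives
\[ |N^-(u)\triangle N^-(v)| = |N^-(u)|+|N^-(v)|-2|N^-(u)\cap N^-(v)| \geq 2k-2b . \]
Here the neighbourhoods are taken in the same sense as in the proof of Theorem~\ref{NRD-easy}, where the parameter counting common out-neighbours was used directly for $N^-$. If one wants to be careful about direction, the same computation applies verbatim to the reverse digraph $\widetilde{D}$. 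The in-neighbourhoods of $\widetilde{D}$ are exactly the out-neighbourhoods of $D$, and $\widetilde{D}$ is still $k$-regular. The discussion in the introduction shows that a resolving set of $\widetilde{D}$ yields an optimal cop placement of the same size in $D$, so either reading gives the bound on $\zeta(D)$. I expect this bookkeeping of in- versus out-neighbourhoods to be the only point needing any care; everything else is arithmetic.

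Next, set $c=2(k-b)$. The hypothesis $k-b>\log n$ gives exactly $c>2\log n$, which is the condition required by Lemma~\ref{distset}. That lemma then provides a distinguishing set of cardinality
\[ \left\lceil \frac{2n\log n}{c} \right\rceil = \left\lceil \frac{n\log n}{k-b} \right\rceil . \]

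Finally, every distinguishing set is a resolving set, and the metric dimension bounds the localization number, as noted in the introduction. Together these give $\zeta(D)\le\dim(D)\le\lceil n\log n/(k-b)\rceil$.
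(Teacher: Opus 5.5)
Your argument is exactly the paper's, which just repeats the proof of Theorem~\ref{NRD-easy}: bound $|N^-(u)\triangle N^-(v)|\geq 2k-2b$ by inclusion--exclusion and apply Lemma~\ref{distset} with $c=2(k-b)$. Your caution about direction is justified, since the Deza condition only controls common out-neighbours, which need not bound common in-neighbours for a general Deza digraph (for NRDs the adjacency matrix is normal, so the two counts agree), and the paper makes the same unstated identification; note also that your reversal fallback gives a distinguishing set of $\widetilde{D}$, which bounds $\dim(\widetilde{D})$ and, via the introduction's remark, $\zeta(D)$, but does not by itself give the stated bound on $\dim(D)$.
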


We also consider a generalization of DRADs known as divisible design digraphs. We say that a $k$-regular asymmetric digraph $D$ is a {\em divisible design digraph} with parameters $(v,k,\lambda_1, \lambda_2, m,n)$ if $D$ has $v$ vertices which can be partitioned into $m$ classes of size $n$ such that the number of common in-neighbours and common out-neighbours of any two vertices in the same class is $\lambda_1$, and is $\lambda_2$ for any two vertices from different classes. These digraphs were defined by Crnkovi\'c and Kharaghani in 2015 in \cite{DDD}; additional constructions can be found in~\cite{DDCD}.

As a direct result of Lemma~\ref{distset}, we obtain the following bound.

\begin{theorem}
    Let $D$ be a $(v, k, \lambda_1, \lambda_2, m, n)$-divisible design digraph such that $\lambda_2>0$. Let $c=\max\{\lambda_1, \lambda_2\}.$ If $k-c >\log(v)$ then $\zeta(D)\leq\dim(D) \leq \ceil{v\log v/(k-c)}.$
\end{theorem}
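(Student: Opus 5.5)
The plan is to follow the proof of Theorem~\ref{NRD-easy} exactly, and invoke Lemma~\ref{distset} with $c = 2(k-c')$, where $c'=\max\{\lambda_1,\lambda_2\}$. (I rename the quantity in the statement to $c'$ to avoid a clash with the $c$ of Lemma~\ref{distset}.) Everything reduces to a lower bound on $|N^-(u)\triangle N^-(v)|$ for distinct vertices $u,v$ of $D$.

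Let $u\neq v$ be vertices. Since $D$ is $k$-regular, $|N^-(u)|=|N^-(v)|=k$. By the definition of a divisible design digraph, $|N^-(u)\cap N^-(v)|$ equals $\lambda_1$ if $u,v$ lie in the same class and $\lambda_2$ otherwise. In either case this is at most $c'$. Hence
\[ |N^-(u)\triangle N^-(v)| = 2k - 2|N^-(u)\cap N^-(v)| \geq 2(k-c'). \]
The hypothesis $k-c'>\log v$ gives $2(k-c')>2\log v$, which is exactly the condition Lemma~\ref{distset} needs with $n=v$ and $c=2(k-c')$.

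Lemma~\ref{distset} then produces a distinguishing set of cardinality
\[ \ceil{2v\log v/(2(k-c'))} = \ceil{v\log v/(k-c')}. \]
A distinguishing set is a resolving set, and the metric dimension bounds the localization number, as noted in the introduction. Therefore $\zeta(D)\leq\dim(D)\leq\ceil{v\log v/(k-c')}$.

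There is no real obstacle here. The only point to check is that we use common \emph{in}-neighbours, which matches the in-neighbourhood condition in Lemma~\ref{distset}; the definition of a divisible design digraph controls both in- and out-neighbour counts, so this holds. The hypothesis $\lambda_2>0$ is not needed for the counting argument and can be carried along unused. Edge cases where $k-c'$ is small are already excluded by the assumption $k-c'>\log v$.
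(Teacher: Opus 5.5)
Your proposal is correct and is essentially the paper's argument: the paper presents this theorem as a direct consequence of Lemma~\ref{distset}, in the same way as Theorem~\ref{NRD-easy}. As you do, it bounds $|N^-(u)\triangle N^-(v)|\geq 2(k-c)$ using the common in-neighbour counts $\lambda_1,\lambda_2$, and applies the lemma with $2(k-c)$ in place of the lemma's $c$. Your remark that the hypothesis $\lambda_2>0$ plays no role in this argument is also accurate.
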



\subsection{Tournaments and team tournaments}

Finally, we consider two classes of objects analogous to DRTs. We note that a regular tournament is doubly regular if the in-neighbourhoods and out-neighbourhoods of each vertex induce a regular tournament.  A \emph{near-regular} tournament is a tournament on an even number of vertices such that the out-degree of each vertex is either $n/2$ or $(n-2)/2$.  A \emph{nearly-doubly-regular tournament} of order $n$, denoted by $\mathrm{NDR}(n)$, is a regular tournament such that the in-neighbourhoods and out-neighbourhoods of each vertex induce a near-regular tournament. For any $\mathrm{NDR}(n)$, we must have that $n\equiv 1$~mod~$4$. See Figure \ref{ndrt} for a nearly doubly regular tournament of order $9$: in this example, any pair of vertices have either $2$ or $3$ common in-neighbours and common out-neighbours.

\begin{figure}[h] 
\centering \begin{tikzpicture}[scale=1, rotate=10]

 \def \n {9}
\foreach \s in {1,...,9}
  \node[draw,scale=0.95, circle, fill=black,label=above:{}] (\s) at ({360/\n * (\s - 1)}:4.4cm) {};

\path [fixed point arithmetic, thick, draw=black,
         decorate,
         postaction={
    on each segment={decoration={markings,mark=at position 0.55 with {\arrow[line width =3pt]{stealth}}},decorate}}]
(1) -- (4)
(1) -- (6)
(1) -- (8)
(1) -- (9)
(2) -- (5)
(2) -- (1)
(2) -- (7)
(2) -- (9)
(3) -- (1)
(3) -- (2)
(3) -- (8)
(3) -- (9)
(4) -- (2)
(4) -- (3)
(4) -- (7)
(4) -- (9)
(5) -- (1)
(5) -- (3)
(5) -- (4)
(5) -- (8)
(6) -- (2)
(6) -- (3)
(6) -- (4)
(6) -- (5)
(7) -- (1)
(7) -- (3)
(7) -- (5)
(7) -- (6)
(8) -- (2)
(8) -- (4)
(8) -- (6)
(8) -- (7)
(9) -- (5)
(9) -- (6)
(9) -- (7)
(9) -- (8)
;

\end{tikzpicture}

\caption{A nearly doubly regular tournament of order $9$.}
\label{ndrt}
\end{figure}

\begin{theorem}
    Let $D$ be a nearly doubly regular tournament of order $n$. If $n\geq 5$ then $\zeta(D) \leq\dim(D) \leq \ceil{4n\log n/(n-1)} = O(\log n).$
\end{theorem}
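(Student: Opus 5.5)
The plan is to apply Lemma~\ref{distset} with $c=(n-1)/2$, after establishing that $|N^-(u)\triangle N^-(v)|\geq (n-1)/2$ for all distinct $u,v$. The small values of $n$ where the hypothesis $c>2\log n$ of Lemma~\ref{distset} fails will be handled by a trivial argument.

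\emph{Step 1: the symmetric difference bound.} Let $D$ be an $\mathrm{NDR}(n)$ and put $k=(n-1)/2$, which is even since $n\equiv 1$ mod $4$. Every vertex has in-degree and out-degree $k$. Take distinct $u,v$. Since $D$ is a tournament, we may assume without loss of generality that $u\to v$, so that $u\in N^-(v)$. The common in-neighbours of $u$ and $v$ are exactly the in-neighbours of $u$ inside the subtournament $T$ induced on $N^-(v)$. By the definition of $\mathrm{NDR}(n)$, $T$ is a near-regular tournament on $k$ vertices, so $u$ has out-degree $k/2$ or $k/2-1$ in $T$. Hence its in-degree in $T$ is $k-1$ minus that, which is at most $k/2$. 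Therefore
\[ |N^-(u)\triangle N^-(v)| = 2k-2|N^-(u)\cap N^-(v)| \geq 2k-k = k = \frac{n-1}{2}. \]

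\emph{Step 2: applying Lemma~\ref{distset}.} Take $c=(n-1)/2$. The lemma yields a distinguishing set of size $\ceil{2n\log n/c}=\ceil{4n\log n/(n-1)}$, provided $(n-1)/2>2\log n$. With natural logarithms, this holds for all $n\geq 13$ (for example, at $n=13$ we have $6>2\log 13\approx 5.13$), and it continues to hold thereafter because the left side grows linearly. Since every distinguishing set is a resolving set, we get $\zeta(D)\le\dim(D)\le\ceil{4n\log n/(n-1)}$ for these $n$.

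\emph{Step 3: small cases, the main obstacle.} For the remaining orders $n\in\{5,9\}$ (the only ones with $n\equiv1$ mod $4$ below $13$), Lemma~\ref{distset} does not apply. Here I will observe instead that the bound is at least $n$: for $n=5$ it is $\ceil{20\log5/4}=9\geq 5$, and for $n=9$ it is $\ceil{36\log 9/8}=10\geq 9$. The whole vertex set $V(D)$ is trivially a distinguishing set, since every pair has a member in it. So $\dim(D)\le n\le\ceil{4n\log n/(n-1)}$ in these cases. Finally, $4n\log n/(n-1)\le 5\log n$ for $n\geq 5$, which gives the $O(\log n)$ conclusion.
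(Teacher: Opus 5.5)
Your proof is correct and follows the paper's route: for $u\to v$, you bound the common in-neighbours by the in-degree of $u$ inside the near-regular tournament induced on $N^-(v)$, obtain $|N^-(u)\triangle N^-(v)|\ge (n-1)/2$, and apply Lemma~\ref{distset}. Your Step~3 is more careful than the paper, which invokes Lemma~\ref{distset} for all $n\ge 5$ even though its hypothesis $c>2\log n$ fails at $n=5$ and $n=9$; your observation that the claimed bound is at least $n$ in those two cases closes that gap.
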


\begin{proof}
Let $D$ be a nearly doubly regular tournament of order $n\geq 5$. Consider two vertices, $x$ and $y.$ Without loss of generality, suppose there is an arc from $x$ to $y.$ Since $D$ is an $\mathrm{NDR}(n)$, the in-neighbourhood of $y$ is a near-regular tournament containing $x$. The out-degree of $x$ within this subtournament is $(n-1)/4$ or $(n-5)/4.$ Hence, the remaining vertices in the subtournament are in-neighbours of $x$, so $x$ and $y$ have $(n-1)/4$ or $(n-5)/4$ common in-neighbours. It follows that 
$$|N^-(x)\triangle N^-(y)|\geq 2\left(\frac{n-1}{2}\right) - 2\left(\frac{n-1}{4}\right)=\frac{n-1}{2}.$$ 
By Lemma \ref{distset}, for $n\geq 5,$ $D$ has a distinguishing set of cardinality at most $\ceil{4n\log n/(n-1)}.$
\end{proof}

The second class we consider are team tournaments, i.e.\ orientations of the complete multipartite graph.  In particular, an $(m,r)$\emph{-team tournament} is an orientation of the complete multipartite graph with $m$ parts of size $r$. To avoid vacuous cases we will assume that $m>1$ and $r>1$.  If $D$ is $(m,r)$-team tournament $D$ such that $D$ is $k$-regular and the number of directed paths of length $2$ from $x$ to $y$ is $\alpha$ if there is an arc from $x$ to $y$, $\beta$ if there is an arc from $y$ to $x$, or $\gamma$ if there is no arc between $x$ and $y$, then we say that $D$ is a \emph{doubly regular $(m, r)$-team tournament}. 

In \cite[{\S}4.1]{drtt}, J{\o}rgensen et al.\ proved that any doubly regular $(m, r)$-team tournament is one of three types. Let $V_1, V_2, \ldots, V_m$ denote the partition into $m$ independent sets of order $r$. The three types are as follows.

\begin{itemize}
    \item Type I: $\mathcal{C}_r(T)$ for some doubly regular tournament $T$ of order $m$, where $\mathcal{C}_r(T)$ denotes the $(m, r)$-team tournament obtained from $T$ via an operation they call ``coclique extension''. 
    \item Type II: Every vertex in $V_i$ dominates exactly half of the vertices in each $V_j$, for $j \neq i$. 
    \item Type III: Every vertex in $V_i$ dominates either all vertices of $V_j$, exactly half of the vertices
in each $V_j$, or none of the vertices of $V_j$, for $j \neq i$, but is neither type I nor type II.
\end{itemize}

See Figure \ref{drmrtt} for a doubly regular $(4,2)$-team tournament of type II with $\alpha=\beta=1$ and $\gamma=3$.
\newpage
\begin{figure}[h] 
\centering \begin{tikzpicture}[scale=1]

 \def \n {5}
 
\node[draw,scale=0.95, circle, fill=black,label=above:{$a$}] (a) at (-1,4) {};
\node[draw,scale=0.95, circle, fill=black,label=above:{$a'$}] (a') at (1,4) {};

\node[draw,scale=0.95, circle, fill=black,label=below:{$b$}] (b) at (4,1) {};
\node[draw,scale=0.95, circle, fill=black,label=below:{$b'$}] (b') at (4,-1) {};
\node[draw,scale=0.95, circle, fill=black,label=below:{$c$}] (c) at (1,-4) {};
\node[draw,scale=0.95, circle, fill=black,label=below:{$c'$}] (c') at (-1,-4) {};

\node[draw,scale=0.95, circle, fill=black,label=below:{$x$}] (x) at (-4,-1) {};
\node[draw,scale=0.95, circle, fill=black,label=below:{$x'$}] (x') at (-4,1) {};
  
\path [fixed point arithmetic, thick, draw=black,
         decorate,
         postaction={
    on each segment={decoration={markings,mark=at position 0.55 with {\arrow[line width =3pt]{stealth}}},decorate}}]
(x) -- (a)
(x) -- (c)
(a)--(b)
(a)--(x')
(a)--(c')
(b)--(c)
(b)--(x')
(b)--(a')
(c)--(x')
(c)--(b')
(x') -- (a')
(x') -- (b')
(x') -- (c')
(a')--(x)
(a')--(c)
(a')--(b')
(b')--(x)
(b')--(a)
(b')--(c')
(c')--(x)
(c')--(b);
\draw[fixed point arithmetic, thick,
         decorate,
         postaction={
    on each segment={decoration={markings,mark=at position 0.6 with {\arrow[line width =3pt]{stealth}}},decorate}}]
(c) -- (a)
(c')--(a')
(x)--(b);

\end{tikzpicture}
\caption{A doubly regular $(4,2)$-team tournament of type II.}
\label{drmrtt}
\end{figure}

Let $d_i(x)$ denote the number of out-neighbours in $V_i$ of a vertex $x$. Any doubly regular $(m, r)$-team tournament of type II has the following properties.

\begin{lemma}{\em\cite[Theorem 4.3]{drtt}}\label{drttdegrees}
Let $D$ be a doubly regular $(m,r)$-team tournament of type II. Then $\alpha=\beta$, $r$ is even, and $\displaystyle{d_i(x)=\frac{r}{2}}$ for all $x\not\in V_i$.
\end{lemma}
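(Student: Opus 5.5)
The conditions $r$ even and $d_i(x)=r/2$ are essentially built into the definition of type II. If every vertex of $V_j$ dominates exactly half of each other class, then $r/2$ must be an integer, so $r$ is even. Also $d_i(x)=r/2$ for every $x\notin V_i$. So the content of the lemma is the identity $\alpha=\beta$, and my plan is to prove it with an adjacency-matrix computation, in the spirit of the matrix equation for NRDs.

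Let $A$ be the adjacency matrix of $D$ and let $B=I_m\otimes J_r$ be the block matrix of the partition, with $(B)_{x,z}=1$ exactly when $x$ and $z$ lie in the same class. Since $D$ orients the complete multipartite graph, $A+A^t=J-B$. Counting directed $2$-paths from $x$ to $y$ gives
\[ A^2=\alpha A+\beta A^t+\gamma(B-I), \]
because there are no digons, so no $2$-path returns to its start. Here $k=(m-1)r/2$.

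Next I use the type II structure to compute the products with $B$. The entry $(AB)_{x,z}$ is the number of out-neighbours of $x$ in the class of $z$. This equals $r/2$ if $z$ lies outside the class of $x$ and $0$ otherwise, so $AB=\frac r2(J-B)$. A vertex $y\in V_j$ dominates $r/2$ vertices of $V_i$, so it is dominated by the other $r/2$; this gives column sums of $r/2$ on each block, and hence $BA=\frac r2(J-B)$ as well. Trivially $JA=AJ=kJ$.

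Now substitute $A^t=J-B-A$ into $(A^t)^2$ and expand:
\[ (A^t)^2=(J-B)^2-(J-B)A-A(J-B)+A^2 . \]
By the identities above, the two middle terms are polynomials in $J$ and $B$. Hence $(A^t)^2-A^2$ lies in the span of $I$, $J$ and $B$, all of which are symmetric matrices. On the other hand, transposing the $2$-path equation and subtracting gives
\[ (A^t)^2-A^2=(\beta-\alpha)(A-A^t). \]
The matrix $A-A^t$ is nonzero and antisymmetric, since $D$ has at least one arc. A matrix that is both symmetric and antisymmetric is zero, so $(\beta-\alpha)(A-A^t)=0$, which forces $\beta=\alpha$.

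The main obstacle is verifying the column-sum statement $BA=\frac r2(J-B)$, that is, that each vertex has exactly $r/2$ in-neighbours in every other class. The argument above obtains it from the "half" condition read from the other class's side. If the definition of type II is only meant as a row condition, I would fall back on $A+A^t=J-B$ together with regularity to derive the column condition.
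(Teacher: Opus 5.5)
Your proof is correct, and it takes a different route from the one the paper points to. The paper gives no proof of this lemma; it simply quotes \cite[Theorem 4.3]{drtt}.

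With the paper's definition of type II, evenness of $r$ and $d_i(x)=r/2$ are indeed immediate. Your matrix argument for $\alpha=\beta$ checks out. You have $AB=BA=\frac r2(J-B)$ and $(J-B)^2=(n-2r)J+rB$, so $(A^t)^2-A^2$ is symmetric. Transposing the path equation shows it equals $(\beta-\alpha)(A-A^t)$, which is antisymmetric. A matrix that is both must vanish, and $A\neq A^t$ (as $m>1$) then forces $\alpha=\beta$.

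The ``obstacle'' you flag is already settled by your own sentence. Each pair of vertices in different classes spans exactly one arc, so the row condition gives the column condition, and no fallback is needed.

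A shorter, local route comes from the identity the paper quotes from \cite[Lemma 4.1]{drtt} in the proof of Lemma~\ref{DRTT_ANRD}. Take an arc $x\to y$ with $x\in V_i$, $y\in V_j$. Each of the $k-d_j(x)$ out-neighbours of $x$ outside $V_j$ lies in a third class. So each is either an in-neighbour of $y$, giving one of the $\alpha$ two-paths, or a common out-neighbour. Counting from $y$ in the same way gives $k-d_j(x)-\alpha=k-d_i(y)-\beta$, that is, $\alpha-\beta=d_i(y)-d_j(x)$. This vanishes under type II.

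The counting relation holds in every doubly regular team tournament and shows exactly how $\alpha=\beta$ is tied to the out-degree pattern between classes. Your global argument amounts to showing that $A^2$ is symmetric. It is in the spirit of the NRD matrix equation and perfectly valid, but it needs the extra bookkeeping with $B$ and $J$, which this statement does not require.
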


\begin{lemma}{\em\cite[Theorem 4.4]{drtt}}\label{alphabeta}
Let $D$ be a doubly regular $(m,r)$-team tournament of type II. Then
\begin{enumerate}
    \item $\displaystyle{\alpha=\beta=\frac{r(m-2)}{4}}$,
    \item $\displaystyle{\gamma=\frac{(m-1)r^2}{4(r-1)}}$, and
    \item $r-1$ divides $m-1$.
\end{enumerate}
\end{lemma}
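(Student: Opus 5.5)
We work with matrices. Let $A$ be the adjacency matrix of $D$, with $n = mr$ vertices, and let $B$ be the block-diagonal matrix with an $r\times r$ all-ones block on each part $V_i$. Then $B$ is the adjacency matrix of the ``same part'' relation including loops. Since $D$ orients the complete multipartite graph, $A + A^t = J - B$.

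The doubly regular condition says that
\[ A^2 = \alpha A + \beta A^t + \gamma (B - I), \]
because $(B-I)_{xy}=1$ exactly when $x \neq y$ lie in the same part, which is exactly when there is no arc between them. By Lemma~\ref{drttdegrees} we have $\alpha=\beta$, so this becomes
\[ A^2 = \alpha(J-B) + \gamma(B-I). \]
Lemma~\ref{drttdegrees} also gives $d_i(x)=r/2$ for every $x\notin V_i$. Hence $D$ is $k$-regular with $k=(m-1)r/2$. It also gives $AB = \tfrac{r}{2}(J-B)$: the entry $(AB)_{xy}$ counts the out-neighbours of $x$ in the part containing $y$, which is $r/2$ when that part differs from the part of $x$ and $0$ otherwise.

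The key step is to multiply the identity for $A^2$ on the right by $B$ and compare the two sides in the basis $\{J-B,\ B\}$. This works because $JB = rJ$ and $B^2 = rB$.

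On the left-hand side,
\[ A^2B = A\cdot\tfrac{r}{2}(J-B) = \tfrac{r}{2}\bigl(kJ - \tfrac{r}{2}(J-B)\bigr) = \tfrac{r}{2}\bigl(k-\tfrac{r}{2}\bigr)(J-B) + \tfrac{r}{2}k\,B. \]
On the right-hand side, using $(J-B)B = r(J-B)$ and $(B-I)B = (r-1)B$,
\[ \alpha(J-B)B + \gamma(B-I)B = \alpha r (J-B) + \gamma(r-1)B. \]
The matrices $J-B$ and $B$ have disjoint supports, and both are nonzero because $m>1$. So we may equate coefficients.
\begin{itemize}
\item The $J-B$ coefficients give $\alpha = \tfrac12\bigl(k - \tfrac r2\bigr) = \tfrac{r(m-2)}{4}$, which is part 1.
\item The $B$ coefficients give $\gamma(r-1) = \tfrac{rk}{2} = \tfrac{r^2(m-1)}{4}$, which is part 2.
\end{itemize}
As a consistency check, multiplying by $J$ instead gives $k^2 = \alpha r(m-1) + \gamma(r-1)$, and this holds for these values.

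For part 3, $\gamma$ is a nonnegative integer, so $r-1$ divides $r^2(m-1)/4$. By Lemma~\ref{drttdegrees}, $r$ is even. Hence $r-1$ is odd, so $\gcd(r-1,4)=1$, and we also have $\gcd(r-1,r^2)=1$. Therefore $r-1$ divides $m-1$.

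The only step that needs care is justifying $AB=\tfrac r2(J-B)$ directly from the type II condition. A related subtlety is keeping $B$, which has ones on the diagonal, separate from $B-I$ when writing the 2-path counts. Once these identities are in place, the rest is bookkeeping in the commutative algebra spanned by $I$, $B$ and $J$.
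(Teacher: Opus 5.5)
Your proof is correct. The paper gives no proof of this lemma; it is quoted directly from \cite[Theorem~4.4]{drtt}. So you have produced a self-contained derivation where the paper relies on the literature. Your route is short: you write the 2-path condition as $A^2=\alpha A+\beta A^t+\gamma(B-I)$ and the type~II condition as $AB=\frac r2(J-B)$, then compare coefficients of $J-B$ and $B$ in $A^2B$. It also fits the matrix viewpoint the paper already uses for NRDs. The citation buys brevity, while your argument makes the values of $\alpha,\beta,\gamma$ checkable in a few lines.

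Two small refinements. First, the only substantive input you borrow from Lemma~\ref{drttdegrees} is $\alpha=\beta$, since $d_i(x)=r/2$ is just the definition of type~II. If you also use $A^tB=\frac r2(J-B)$, the same computation gives $\alpha+\beta=k-\frac r2$ and $\gamma(r-1)=\frac{rk}{2}$ with no assumption on $\alpha,\beta$. So parts~2 and~3 do not depend on $\alpha=\beta$ at all. Moreover, $\alpha=\beta$ itself follows from the elementary count $k-d_j(x)-\alpha=k-d_i(y)-\beta$ of common out-neighbours of an arc $x\to y$, quoted in the proof of Lemma~\ref{DRTT_ANRD}, together with $d_j(x)=d_i(y)=r/2$. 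Second, in part~3 the remark $\gcd(r-1,4)=1$ is superfluous: $4(r-1)\gamma=r^2(m-1)$ already gives $(r-1)\mid r^2(m-1)$, and $\gcd(r-1,r^2)=1$ finishes.
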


In particular, we note that part 3 of Lemma~\ref{alphabeta} implies that $r\leq m$.  Next, we make the following observation.

\begin{lemma} \label{DRTT_ANRD}
Let $D$ be a doubly regular $(m,r)$-team tournament of type II. Then $D$ is an asymmetric normally regular digraph with parameters $n=mr$, $\displaystyle{k=\frac{r(m-1)}{2}}$, $\displaystyle{\lambda=\frac{r(m-2)}{4}}$ and \linebreak $\displaystyle{\mu=\frac{r(r-2)(m-1)}{4(r-1)}}$.
\end{lemma}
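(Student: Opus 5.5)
The plan is to verify the defining conditions of an $\ANRD$ one at a time: no digons, regularity with the stated $k$, and the two common out-neighbour counts. The counts are the real content. I will get them by turning the $2$-path conditions defining a doubly regular team tournament into a matrix identity, using Lemmas~\ref{drttdegrees} and~\ref{alphabeta}.

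\emph{Asymmetry and regularity.} Asymmetry is immediate, since a team tournament is an orientation of a complete multipartite graph. By Lemma~\ref{drttdegrees}, each vertex $x\in V_i$ has exactly $r/2$ out-neighbours in each of the other $m-1$ parts. So its out-degree is $r(m-1)/2$. Its total degree in the underlying complete multipartite graph is $r(m-1)$, so its in-degree is also $r(m-1)/2=k$.

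\emph{Matrix set-up.} Let $A$ be the adjacency matrix. Let $B$ be the block-diagonal $0/1$ matrix with $B_{xy}=1$ exactly when $x,y$ lie in the same part, including $x=y$. Since $D$ orients the complete multipartite graph, $A+A^t=J-B$. The doubly regular condition says that the number of directed $2$-paths from $x$ to $y$ is $\alpha$, $\beta$ or $\gamma$ according to the adjacency of $x$ and $y$. There are no $2$-paths from $x$ back to $x$, because there are no digons. Hence
\[ A^2=\alpha A+\beta A^t+\gamma(B-I). \]
Also, by Lemma~\ref{drttdegrees}, $(AB)_{xy}$ counts the out-neighbours of $x$ in the part of $y$. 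This equals $r/2$ if $y$ lies in a different part from $x$, and $0$ otherwise, so $AB=\tfrac r2(J-B)$. Combining these with $AJ=kJ$ gives
\[ AA^t=A(J-B-A)=kJ-\tfrac r2(J-B)-\alpha A-\beta A^t-\gamma(B-I). \]

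\emph{Reading off the counts.} The entry $(AA^t)_{xy}$ is the number of common out-neighbours of $x$ and $y$. We substitute $\alpha=\beta=r(m-2)/4$ and $\gamma=(m-1)r^2/(4(r-1))$ from Lemma~\ref{alphabeta}.
\begin{itemize}
\item If exactly one arc joins $x$ and $y$, they are in different parts. Then exactly one of $A_{xy}$, $A^t_{xy}$ is $1$, and the entry is
\[ k-\tfrac r2-\alpha=\tfrac{r(m-1)}2-\tfrac r2-\tfrac{r(m-2)}4=\tfrac{r(m-2)}4=\lambda. \]
\item If no arc joins $x$ and $y$, then $x\ne y$ lie in the same part. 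The entry is
\[ k-\gamma=\tfrac{r(m-1)}2-\tfrac{(m-1)r^2}{4(r-1)}=\tfrac{r(r-2)(m-1)}{4(r-1)}=\mu. \]
\end{itemize}
The digon case does not arise. Hence $D$ is an $\ANRD(mr,k,\lambda,\mu)$ with the stated parameters.

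The only delicate step is justifying the identity $AB=\tfrac r2(J-B)$. This is exactly where the type~II hypothesis enters, through Lemma~\ref{drttdegrees}. It is also where one must keep track of the diagonal entries, which correctly come out as $k$.
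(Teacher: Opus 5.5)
Your proof is correct and is essentially the paper's argument. Reading $AA^t = kJ - AB - A^2$ entrywise is exactly the count ``common out-neighbours of $x,y$ $=k-{}$(out-neighbours of $x$ in the part of $y$)${}-{}$(directed $2$-paths from $x$ to $y$)''; the paper uses this count (citing \cite[Lemma 4.1]{drtt} for the adjacent case) together with Lemmas~\ref{drttdegrees} and~\ref{alphabeta}. Your matrix form is somewhat more self-contained: it derives that formula rather than citing it, and it works directly with common out-neighbours as in the NRD definition. By contrast, the paper's non-adjacent case counts common in-neighbours and relies on the two counts agreeing.
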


\begin{proof}
The fact that $n=mr$ is clear from the definition, while $k=r(m-1)/2$ follows immediately from Lemma~\ref{drttdegrees}.

Now suppose that the vertices of $D$ are partitioned into independent sets $V_1, V_2, \ldots, V_m$.  First, let $x$ and $y$ be a pair of vertices with an arc from $x$ to $y$: since $D$ is an orientation of a complete multipartite graph it follows that $x\in V_i$ and $y\in V_j$ for some $i\neq j$.  The number of common out-neighbours (and hence, the number of common in-neighbours) of $x$ and $y$ is $k-d_j(x)-\alpha=k-d_i(y)-\beta$ (see \cite[Lemma 4.1]{drtt}).  
From Lemma~\ref{drttdegrees} we know that $d_i(v)=r/2$ for any $v\not\in V_i$. It follows from Lemma \ref{alphabeta} that
\begin{eqnarray*}
\lambda = |N^-(x)\cap N^-(y)| & = & \frac{r(m-1)}{2}-\frac{r}{2}-\alpha \\
                              & = & \frac{r(m-2)}{4}.
\end{eqnarray*}

Finally, let $x$ and $y$ be a pair of non-adjacent vertices, which means that $x,y\in V_i$ for some $i$.  There are $\gamma$ paths of length $2$ from $x$ to $y$. Hence, of the $r(m-1)/2$ in-neighbours of $y$, $\gamma$ are out-neighbours of $x$ and $r(m-1)/2-\gamma$ are in-neighbours of $x$. Again, it follows from Lemma \ref{alphabeta} that 
\begin{eqnarray*}
\mu = |N^-(x)\cap N^-(y)| & = & \frac{r(m-1)}{2}-\gamma \\
                          & = & \frac{r(m-1)}{2} - \frac{r^2(m-1)}{4(r-1)} \,\,\textnormal{ (by Lemma \ref{alphabeta})} \\
                          & = & (m-1) \left( \frac{r}{2}-\frac{r^2}{4(r-1)} \right) \\
                          & = & (m-1) \left(\frac{2r(r-1) - r^2}{4(r-1)}\right) \\
                          & = & \frac{r(r-2)(m-1)}{4(r-1)}.
\end{eqnarray*}
This completes the proof.
\end{proof}

Since these are examples of ANRDs, we can immediately apply Theorem~\ref{ANRD} which yields an upper bound of $O(\sqrt{n}\log n)$ for the localization number and metric dimension of these digraphs.  However, we can apply Lemma~\ref{distset} directly to obtain a tighter bound.

\begin{theorem} \label{DRTT}
Let $D$ be a doubly regular $(m,r)$-team tournament of type II. If $n=mr\geq 27$ then $\zeta(D) \leq \dim(D) \leq \ceil{4\log n}$.
\end{theorem}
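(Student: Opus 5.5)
The plan is to apply Lemma~\ref{distset} directly with $c=n/2$. We get $c$ by computing the in-neighbourhood symmetric differences exactly from the ANRD parameters in Lemma~\ref{DRTT_ANRD}. Since $D$ is asymmetric, a pair of distinct vertices $u,v$ is joined by either one arc or none. As in the proof of Theorem~\ref{NRD-easy}, $|N^-(u)\triangle N^-(v)|$ is therefore $2(k-\lambda)$ or $2(k-\mu)$, using that the number of common in-neighbours is $\lambda$ or $\mu$ respectively (as computed in the proof of Lemma~\ref{DRTT_ANRD}).

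For adjacent vertices, substitute $k=r(m-1)/2$ and $\lambda=r(m-2)/4$:
\[ k-\lambda=\frac{2r(m-1)-r(m-2)}{4}=\frac{rm}{4}=\frac n4, \]
so $|N^-(u)\triangle N^-(v)|=n/2$. For non-adjacent vertices, substitute $\mu=r(r-2)(m-1)/(4(r-1))$:
\[ k-\mu=\frac{r(m-1)\bigl(2(r-1)-(r-2)\bigr)}{4(r-1)}=\frac{r^2(m-1)}{4(r-1)}, \]
so the symmetric difference is $r^2(m-1)/(2(r-1))$. Comparing with $n/2=rm/2$, the inequality $r(m-1)/(r-1)\geq m$ is equivalent to $m\geq r$. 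This holds because part 3 of Lemma~\ref{alphabeta} gives $r-1\mid m-1$, hence $r\le m$. So in every case $|N^-(u)\triangle N^-(v)|\geq n/2$.

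Next, take $c=n/2$ in Lemma~\ref{distset}. The hypothesis $c>2\log n$ becomes $n>4\log n$. This holds comfortably for $n\geq 27$, since $27/4>\log 27$ and $n-4\log n$ is increasing for $n\ge 4$. The lemma then yields a distinguishing set of size $\ceil{2n\log n/(n/2)}=\ceil{4\log n}$. Since a distinguishing set is a resolving set, and $\zeta(D)\le\dim(D)$, this gives the chain of inequalities in the statement.

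The only step needing care is the non-adjacent case, where showing the bound is at least $n/2$ depends on $r\le m$. Everything else is routine substitution.
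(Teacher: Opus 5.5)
Your proof is correct and follows essentially the same route as the paper. The paper shows $\lambda-\mu=\frac{r(m-r)}{4(r-1)}\geq 0$ using $r\le m$ (from $r-1\mid m-1$), which is equivalent to your direct comparison of $2(k-\mu)$ with $n/2$, and then applies Lemma~\ref{distset} with $c=n/2$; your check of that lemma's hypothesis as $n>4\log n$ is exactly what is required, whereas the paper verifies the stronger $n/2>4\log n$.
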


\begin{proof} 
To apply Lemma~\ref{distset}, we need to determine $\max\{\lambda,\mu\}$. Looking at the difference of the two values obtained in Lemma~\ref{DRTT_ANRD}, we see that 
\begin{eqnarray*}
\lambda-\mu & = & \frac{r(m-2)}{4}  - \frac{r(r-2)(m-1)}{4(r-1)}\\
            & = & \frac{r(r-1)(m-2) - r(r-2)(m-1)}{4(r-1)}\\
            & = & \frac{mr-r^2}{4(r-1)} \\
            & = & \frac{r}{4(r-1)}(m-r) \\
            & \geq & 0 \phantom{\int}
\end{eqnarray*}
since $m\geq r$ and $r>1$.  Therefore $\lambda\geq\mu$.  It follows that for any pair of vertices $x,y$, 
$$|N^-(x)\triangle N^-(y)|\geq 2\frac{r(m-1)}{2}-2 \frac{r(m-2)}{4}=\frac{mr}{2}=\frac{n}{2}.$$
Since $\frac{n}{2}>4\log n$ when $n\geq 27$, we can apply Lemma \ref{distset} to find a distinguishing set of cardinality at most 
$$\left\lceil \frac{2n\log n}{n/2} \right\rceil = \ceil{4\log n}.$$ 
\end{proof}

We conclude this section by observing that, in common with doubly regular tournaments, both nearly doubly regular tournaments and doubly regular team tournaments of type II of order $n$ have upper bounds of $O(\log n)$ on their localization number and metric dimension.


\section{Future directions}
A natural open question is identify other classes of highly structured digraphs where the $O(\sqrt{n}\log n)$ upper bound (see Theorem~\ref{ANRD}) on localization number and metric dimension still applies -- for instance, does it hold for normally regular digraphs with digons, for Deza digraphs, or for divisible design digraphs?  We also saw that this bound can be improved to $O(\log n)$ in some special cases, such doubly regular $(m,r)$-team tournaments of type II, which leads us to ask if the $O(\sqrt{n}\log n)$ bound is tight.

A more challenging question would be to investigate directed analogues of strongly regular graphs that are defined in terms of the number of directed paths of length $2$ between vertex pairs, such as directed strongly regular graphs. Unlike the common-neighbour variants we consider here, these digraphs do not necessarily have fixed numbers of common in-neighbours, which prevents a direct application of our existing methods. The central open problem would be to determine whether similar probabilistic or combinatorial methods can be used to obtain nontrivial upper bounds on the localization number and metric dimension of these digraphs. 


\section*{Acknowledgements}
The first author is supported by an NSERC Discovery Grant.  The second author is supported by an NSERC Postdoctoral Fellowship.


\end{document}